\numberwithin{equation}{section}
\newtheorem{theorem}{\indent Theorem}[section]
\newtheorem{lemma}{\indent Lemma} [section]
\newtheorem{definition}{\indent Definition} [section]
\newtheorem{remark}{\indent Remark} [section]
  \theoremstyle{nonumberplain}
    \newenvironment{proof}[1][Proof]{\textbf{#1}~~}{\hfill $\square$ \bigskip}
\DeclareMathOperator*{\esssup}{ess\, sup}
\def\rn{{\mathbb{R}}^n}
\def\loc{{\mathrm{loc}}}
\begin{document}

\pagestyle{myheadings} \markboth{\small\it Pu Zhang and Di Fan}
 {\small\it Commutators for the maximal and sharp functions on weighted Morrey spaces}

\title{\bf{Commutators for the maximal and sharp functions
   with weighted Lipschitz functions  on weighted Morrey spaces }
  \footnotetext{This work was supported by the National
     Natural Science Foundation of China (Grant No. 11571160),
      the Reform and Development Foundation for Local Universities from Central Government of China (No. 2020YQ07).
        and the Scientific Research Fund of Mudanjiang Normal University (No. D211220637). }
    \footnotetext{E-mail:  puzhang@sohu.com (P. Zhang); fandi2024@163.com (D. Fan)}
        }

\author{Pu Zhang\footnote{Corresponding author: Pu Zhang}~ and~ Di Fan\\
           {\small\it Department of Mathematics, Mudanjiang Normal
                 University, Mudanjiang 157011, P. R. China}
   }

 \date{}
\maketitle

\begin{center}\begin{minipage}{14cm}
{\bf Abstract} ~
We study the boundedness of commutators of the Hardy-Littlewood
maximal function and the sharp maximal function on weighted Morrey spaces
 when the  symbols of the commutators belong to weighted Lipschitz spaces.
  Some new characterizations for weighted Lipschitz functions are obtained
   in term of the boundedness of the commutators.

{\bf Keywords:}~  Hardy-Littlewood maximal function;
 sharp maximal function; commutator; $A_p$ weight; weighted Lipschitz space;
 weighted Morrey space

{\bf Mathematics subject classification (2020):}~ 42B25, 42B20, 26A16, 47B47
\end{minipage}
\end{center}

\maketitle

\section{Introduction and Main Results}

Let $T$ be the  classical singular integral operator. The commutator
$[b,T]$ generated by $T$ and a suitable function $b$ is given by
\begin{equation} \label{equ:1.1}    
[b,T](f)(x) = T\big((b(x)-b)f\big)(x)
       = b(x)T(f)(x)-T(bf)(x).
\end{equation}

A well known result states that $[b,T]$ is bounded on $L^p(\rn)$
for $1<p<\infty$ if and only if $b\in {BMO(\rn)}$
(see Coifman, Rochberg and Weiss \cite{CRW} and Janson \cite{J1978}).
Another kind of boundedness of $[b,T]$ was given
by Janson \cite{J1978} in 1978, see also Paluszy\'nski \cite{P1995}.
It was proved that $[b,T]$ is bounded from $L^p(\rn)$ to $L^q(\rn)$
for $1<p<q<\infty$ if and only if $b\in {Lip_{\beta}(\rn)}$ with
$0<\beta=n(1/p-1/q)<1$,
where ${Lip_{\beta}(\rn)}$ is the Lipschitz space of order $\beta$.
In 2008, Hu and Gu \cite{HG} considered the  weighted boundedness
of commutator $[b,T]$ when $b$ belongs to weighted Lipschitz spaces.
In 2009, Komori and Shirai \cite{KS} introduced weighted Morrey spaces
 and studied the boundedness of some classical operators
  in harmonic analysis on such spaces,
   including the commutators of singular integral.
In 2012, Wang \cite{W2012} considered some boundedness properties
 of commutator $[b,T]$ on the weighted Morrey spaces
  when the symbol $b$ belongs to weighted BMO or weighted Lipschitz spaces.

On the other hand, the boundedness properties of commutators of
 the Hardy-Littlewood maximal function and sharp maximal function
  have been studied intensively by many authors in recent years.
   See \cite{AGKM, BMR, FJ2012, GD, MS, Xie, Z2017, ZW2014}, for instance.

Let $Q$ be a cube in $\rn$ with sides
parallel to the coordinate axes.
 We denote by $|Q|$ the Lebesgue measure and by $\chi_Q$
the characteristic function of $Q$.
Let $f$ be a locally integrable function on $\rn$.
The Hardy-Littlewood maximal function of $f$ is defined by
$$M(f)(x)=\sup_{Q\ni x} \frac{1}{|Q|} \int_{Q} |f(y)| dy,
$$
and the sharp maximal function $M^{\sharp}$  
  is given by
$$M^{\sharp}(f)(x)=\sup_{Q\ni x}\frac{1}{|Q|} \int_{Q}|f(y)-f_{Q}|{d}y,
$$
where the supremum is taken over all cubes $Q\subset \rn$
containing $x$ and $f_Q={|Q|}^{-1}\int_Q f(x)dx$.

Similar to (\ref{equ:1.1}), we can define two different kinds of
commutators of the Hardy-Littlewood maximal function as follows.

Let $b$ be a locally integrable function. The maximal commutator generated
by $M$ and $b$ is given by
$$M_b(f)(x)= M\big((b(x)-b)f\big)(x)
=\sup_{Q\ni x} \frac{1}{|Q|} \int_{Q} |b(x)-b(y)||f(y)|dy,
$$
where the supremum is taken over all cubes $Q\subset\rn$ containing
$x$.

The (nonlinear) commutator generated by $M$ and $b$ is defined by
$$[b,M](f)(x)=b(x)M(f)(x)-M(bf)(x).
$$

Similarly, we can also define the commutator generated by
$M^{\sharp}$ and $b$ by
$$[b,M^{\sharp}](f)(x)= bM^{\sharp}(f)(x) -M^{\sharp}(bf)(x).
$$

Obviously, commutators $M_b$ and $[b,M]$ essentially
 differ from each other. 
 $M_b$ is positive and sublinear, but $[b,M]$ and $[b,M^{\sharp}]$
  are neither positive nor sublinear.
 The operator $[b,M]$ can be used in studying the product
  of a function in $H^1$ and a
    function in $BMO$, see \cite{BIJZ} for example.

In 1990, Milman and Schonbek \cite{MS} showed that $[b,M]$
is bounded on $L^p(\rn)$ $(1<p<\infty)$ when $b\in BMO(\rn)$ and $b\geq 0$.
 In 2000, Bastero, Milman and Ruiz \cite{BMR} gave some
 characterizations for the boundedness of $[b,M]$ and
 $[b,M^{\sharp}]$ on $L^p$ spaces. Certain BMO classes are characterized
 by the boundedness of the commutators.
  Xie \cite{Xie} extended the results to the context of Morrey spaces.
 Recently, Zhang in \cite{Z2017} and \cite{Z2019} considered
  the mapping properties of $M_{b}$, $[b,M]$ and $[b,M^{\sharp}]$
   when the symbols $b$ belong to Lipschitz space.
 Some necessary and sufficient conditions for the boundedness of
 the commutators on Lebesgue and Morrey spaces are given,
   by which some new characterizations of Lipschitz functions are obtained.
 The results were extended to variable exponent
  Lebesgue spaces in \cite{Z2019}, \cite{ZSW} and \cite{YYL},
     and to the context of Orlicz spaces in \cite{Gul}, \cite{GD}, \cite{GDH} and \cite{ZWS}.

Most recently, the boundedness of 
 $M_{b}$, $[b,M]$ and $[b,M^{\sharp}]$ on weighted
Lebesgue spaces are characterized when the symbols $b$ belong to
weighted Lipschitz spaces by Zhang and Zhu in \cite{ZZ}.
 Some new characterizations for weighted Lipschitz functions are also given.

Motivated by the results mentioned above, we will
 consider the mapping properties of the commutators on
  weighted Morrey spaces when the symbols $b$ belong to weighted Lipschitz spaces.

To state our results, let us recall some definitions and notations.

\begin{definition}[\cite{GarR}, \cite{St}]  \label{def:1.1}
A weight is a nonnegative locally integrable function on $\rn$
that takes values in $(0,\infty)$ almost everywhere.
 Let $\mu$ be a weight.

 (1)  We say that $\mu \in A_p$ for $1<p<\infty$,
 if there exists a constant $C>0$ such that for any cube $Q$ in $\rn$,
$$\bigg(\frac{1}{|Q|}\int_{Q}\mu(x)dx\bigg)
  \bigg(\frac{1}{|Q|}\int_{Q}\mu(x)^{1-p'}dx\bigg)^{p-1} \leq C,
$$
here and below, $p'$ denotes the conjugate exponent of $p$,
that is $1/p+1/p'=1$.

 (2)  We say that $\mu \in A_1$ if there is a constant $C>0$
such that for any cube $Q\subset \rn$,
$$\frac{1}{|Q|}\int_{Q}\mu(y)dy \leq C\mu(x)~~ \hbox{a.e.}~ x \in Q.
  $$

 (3) We define $A_{\infty}=\bigcup\limits_{1\le{p}<\infty}A_{p}.$
\end{definition}

Let $\mu$ be a weight. For a cube $Q$ and a function $f$, 
we write $\mu(Q)=\int_{Q}\mu(x)dx$ and
$$\|f\|_{L^{p}(\mu)}=\bigg(\int_{\mathbb{R}^{n}}|f(x)|^{p}\mu(x)dx\bigg)^{1/p}.
$$

Following \cite{Gar}, we define the weighted Lipschitz function spaces.
See also \cite{HG} and \cite{T2011}.

\begin{definition}     \label{def:1.2}     
Let $1\le{p}\le\infty$, $0<\beta<1$ and $\mu\in{A_{\infty}}$.
The weighted Lipschitz space, denoted by $Lip_{\beta,\mu}^p$, is given by
$$Lip_{\beta,\mu}^p
  =\Big\{f \in{L_{\rm{loc}}^1(\rn)}: \|f\|_{Lip_{\beta ,\mu}^p}<\infty \Big\},
$$
where
$$\|f\|_{Lip_{\beta ,\mu}^p} = \sup_Q \frac{1}{\mu(Q)^{\beta /n}}
 {\bigg(\frac{1}{\mu(Q)} \int_Q |f(x)-f_Q|^p}\mu (x)^{1-p}dx\bigg)^{1/p},
   ~~\mbox{for}~ 1\le p<\infty,
$$
and
$$\|f\|_{Lip_{\beta ,\mu}^{\infty}}
   = \sup_Q \frac{1}{\mu(Q)^{\beta/n}} \esssup_{x\in{Q}} \frac{|f(x)-f_Q|}{\mu(x)},
$$
here and below,
 ``$\sup\limits_Q$'' always means the supremum is taken over all
 cubes $Q\subset \rn$.
\end{definition}

Modulo constants, $Lip_{\beta,\mu}^p$ is a Banach space
with respect to the norm $\|\cdot\|_{Lip_{\beta ,\mu}^p}$.
We simply write $Lip_{\beta,\mu} =Lip_{\beta ,\mu}^1$.
 Obviously the space $Lip_{\beta,\mu}^p$ is
a special case of the so-called weighted Morrey-Campanato spaces
studied by Garc\'ia-Cuerva \cite{Gar} and other authors, see for example
\cite{T2011}, \cite{YY} and \cite{SDH}, and references therein.

 The study of Morrey spaces goes back to the work of Morrey \cite{M1938},
which investigated the local behavior of solutions to second order
 elliptic partial differential equations.
In 2009, Komori and Shirai \cite{KS} introduced the
 weighted Morrey spaces as follows.

\begin{definition}[\cite{KS}]         
Let $1\leq p<\infty$ and $0<\kappa<1$. For two weights $u$ and $v$,
the weighted Morrey space $L^{p,\kappa}(u,v)$ is defined by
\begin{align*}
L^{p,\kappa}(u,v)=\big\{f\in L_{\loc}^{p}(u):
     \|f\|_{L^{p,\kappa}(u,v)}<\infty\big\},
\end{align*}
where
\begin{align*}
\|f\|_{L^{p,k}(u,v)}
 =\sup_{Q}\bigg(\frac{1}{v(Q)^{\kappa}}\int_{Q}|f(x)|^{p}u(x)dx\bigg)^{1/p}.
\end{align*}

If $u=v=\mu$, we write $L^{p,\kappa}(u,v)=L^{p,\kappa}(\mu),$
 the weighted Morrey space with one weight.
\end{definition}

Given a cube $Q_0$, we need the following locally maximal function
 with respect to $Q_0$,
$$M_{Q_0}(f)(x) =\sup_{Q_0\supseteq{Q}\ni{x}}
\frac{1}{|Q|}\int_{Q}|f(y)|dy,
$$
where the supremum is taken over all cubes $Q\subseteq{Q_0}$
 and $x\in{Q}$.

We are now in a position to state our results.

\begin{theorem}   \label{thm.1.1}  
Let $b$ be a locally integrable function, $\mu\in {A_1}$ and $0<\beta<1$.
Suppose that $1<p<n/\beta$, $1/q=1/p-\beta/n$ and $0<\kappa <p/q$.
Then the following assertions are equivalent:

 {\rm (1)}  $b\in Lip_{\beta,\mu}$.

 {\rm (2)}  $M_{b}$ is bounded from $L^{p,\kappa}(\mu)$
    to $L^{q,{\kappa}q/p}(\mu^{1-q},\mu)$.
\end{theorem}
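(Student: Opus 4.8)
The plan is to establish the two implications separately: $(2)\Rightarrow(1)$ by testing the boundedness on characteristic functions of cubes, and $(1)\Rightarrow(2)$ by dominating $M_b$ pointwise by a weighted fractional maximal operator.

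For the implication $(2)\Rightarrow(1)$, I would fix an arbitrary cube $Q$ and apply the boundedness to $f=\chi_Q$. For every $x\in Q$, taking the cube $Q$ itself in the supremum defining $M_b$ and using Jensen's inequality gives
$$M_b(\chi_Q)(x)\ \ge\ \frac1{|Q|}\int_Q|b(x)-b(y)|\,dy\ \ge\ |b(x)-b_Q|,$$
hence $|b(x)-b_Q|\chi_Q\le M_b(\chi_Q)$ on $\rn$. A direct computation, using $\mu(Q'\cap Q)\le\min\{\mu(Q'),\mu(Q)\}$ and $0<\kappa<1$, shows $\|\chi_Q\|_{L^{p,\kappa}(\mu)}=\mu(Q)^{(1-\kappa)/p}$. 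Consequently,
$$\bigg(\frac1{\mu(Q)^{\kappa q/p}}\int_Q|b(x)-b_Q|^q\mu(x)^{1-q}\,dx\bigg)^{1/q}\ \le\ \big\|M_b(\chi_Q)\big\|_{L^{q,\kappa q/p}(\mu^{1-q},\mu)}\ \le\ C\,\mu(Q)^{(1-\kappa)/p},$$
which simplifies to $\big(\int_Q|b-b_Q|^q\mu^{1-q}\big)^{1/q}\le C\mu(Q)^{1/p}$. Applying Hölder's inequality with exponents $q$ and $q'$ to the factorization $|b-b_Q|=\big(|b-b_Q|\,\mu^{(1-q)/q}\big)\,\mu^{(q-1)/q}$ gives $\int_Q|b-b_Q|\,dx\le C\,\mu(Q)^{1/p}\mu(Q)^{1/q'}=C\,\mu(Q)^{1+\beta/n}$, where we used $1/p-1/q=\beta/n$. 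Taking the supremum over all cubes yields $b\in Lip_{\beta,\mu}$.

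For the implication $(1)\Rightarrow(2)$, write $\mathcal{M}^{\mu}_{\beta/n}(f)(x):=\sup_{Q\ni x}\mu(Q)^{\beta/n-1}\int_Q|f(y)|\,\mu(y)\,dy$ for the fractional maximal operator of order $\beta/n$ relative to the measure $\mu\,dx$. Assuming $b\in Lip_{\beta,\mu}$, I would first prove the pointwise bound
$$M_b(f)(x)\ \le\ C\,\|b\|_{Lip_{\beta,\mu}}\,\mu(x)\,\mathcal{M}^{\mu}_{\beta/n}(f)(x).$$
Indeed, fix $x$ and a cube $Q\ni x$ and split $|b(x)-b(y)|\le|b(x)-b_Q|+|b(y)-b_Q|$. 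Since $\mu\in A_1$, the spaces $Lip_{\beta,\mu}^{s}$ ($1\le s\le\infty$) coincide with comparable norms, and the $s=\infty$ description gives $|b(z)-b_Q|\le C\|b\|_{Lip_{\beta,\mu}}\,\mu(Q)^{\beta/n}\mu(z)$ for a.e. $z\in Q$. Combining this with the elementary $A_1$ consequences $\int_Q|f|\,dy\le C\,|Q|\,\mu(Q)^{-1}\int_Q|f|\,\mu\,dy$ (used in the term containing $|b(x)-b_Q|$) and $|Q|^{-1}\le C\,\mu(x)\,\mu(Q)^{-1}$ a.e.\ (used in the term containing $|b(y)-b_Q|$), one checks that both terms are at most $C\,\|b\|_{Lip_{\beta,\mu}}\,\mu(x)\,\mu(Q)^{\beta/n-1}\int_Q|f|\,\mu$, and taking the supremum over $Q\ni x$ gives the claim. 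The factor $\mu(x)$ is exactly what is needed to absorb the weight $\mu^{1-q}$ of the target space: raising the pointwise bound to the power $q$, integrating $\mu^{1-q}\,dx$ over an arbitrary cube and using $\mu^{q}\mu^{1-q}=\mu$, the inequality $\|M_bf\|_{L^{q,\kappa q/p}(\mu^{1-q},\mu)}\le C\|b\|_{Lip_{\beta,\mu}}\|f\|_{L^{p,\kappa}(\mu)}$ reduces to
$$\big\|\mathcal{M}^{\mu}_{\beta/n}f\big\|_{L^{q,\kappa q/p}(\mu)}\ \le\ C\,\|f\|_{L^{p,\kappa}(\mu)}.$$
Since $1<p<n/\beta$ and $1/q=1/p-\beta/n$, the operator $\mathcal{M}^{\mu}_{\beta/n}$ maps $L^{p}(\mu)$ into $L^{q}(\mu)$ (the classical fractional-maximal bound on the space of homogeneous type $(\rn,\mu\,dx)$, $\mu\in A_1$ being doubling); and the hypothesis $0<\kappa<p/q$, equivalently $\kappa q/p<1$, is precisely what allows one to pass to the weighted Morrey spaces, by the standard splitting of the fixed cube into a local part (handled by the $L^p(\mu)\to L^q(\mu)$ bound) and dyadic far parts (summed by a convergent geometric series).

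The step I expect to be the main obstacle is the pointwise estimate in $(1)\Rightarrow(2)$ — in particular the bookkeeping that makes the ``diagonal'' contribution coming from $|b(x)-b_Q|$ and the ``off-diagonal'' contribution $\frac1{|Q|}\int_Q|b(y)-b_Q||f(y)|\,dy$ collapse to the same quantity $\mu(x)\,\mathcal{M}^{\mu}_{\beta/n}(f)(x)$. This hinges on the $s=\infty$ description of $\|b\|_{Lip_{\beta,\mu}}$ (hence on the equivalence of the spaces $Lip_{\beta,\mu}^{s}$ for $\mu\in A_1$, a John--Nirenberg type fact) together with the two $A_1$ estimates relating $|Q|^{-1}$, $\mu(x)$ and $\mu(Q)$. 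By comparison, the boundedness of the fractional maximal operator on weighted Morrey spaces needed at the end is routine once its $L^p(\mu)\to L^q(\mu)$ mapping property is available, and the implication $(2)\Rightarrow(1)$ is essentially a bookkeeping exercise with Hölder's inequality and the explicit value of $\|\chi_Q\|_{L^{p,\kappa}(\mu)}$.
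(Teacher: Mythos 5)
Your proposal is correct and follows essentially the same route as the paper: the implication $(1)\Rightarrow(2)$ via the pointwise bound $M_b(f)(x)\le C\|b\|_{Lip_{\beta,\mu}}\mu(x)M_{\beta,\mu,r}(f)(x)$ (the paper's Lemma \ref{lem.ZZ-Mb}, built on the $L^\infty$-type description of $Lip_{\beta,\mu}$ for $\mu\in A_1$) followed by the weighted Morrey boundedness of the fractional maximal operator (the paper cites Wang's result, Lemma \ref{lem.W2012}, with $1<r<p$, whereas you take $r=1$ and sketch the standard local/far decomposition — a harmless variation since $M_{\beta,\mu,1}\le M_{\beta,\mu,r}$); and $(2)\Rightarrow(1)$ by testing on $\chi_Q$, using $|b(x)-b_Q|\le M_b(\chi_Q)(x)$ on $Q$, H\"older with exponents $q,q'$, and $\|\chi_Q\|_{L^{p,\kappa}(\mu)}\le\mu(Q)^{(1-\kappa)/p}$, exactly as in the paper.
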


\begin{theorem}      \label{thm.1.2}  
Let $b$ be a locally integrable function, $\mu\in {A_1}$ and $0<\beta<1$.
Suppose that $1<p<n/\beta$, $1/q=1/p-\beta/n$ and $0<\kappa <p/q$.
Then the following assertions are equivalent:

 {\rm (1)}  $b\in Lip_{\beta,\mu}$ and $b\geq 0$ a.e. in $\rn$.

 {\rm (2)}  $[b,M]$ is bounded from $L^{p,\kappa}(\mu)$
    to $L^{q,{\kappa}q/p}(\mu^{1-q},\mu)$.

 {\rm (3)}  There exists a constant $C>0$ such that
\begin{equation}   \label{equ:1.2}
\sup_{Q}\dfrac{1}{\mu(Q)^{\beta/n}}\bigg(\frac{1}{\mu(Q)}
  \int_{Q}\big|b(x)-M_{Q}(b)(x)\big|^{q}\mu(x)^{1-q}dx\bigg)^{1/q}\leq C.
\end{equation}
\end{theorem}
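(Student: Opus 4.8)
The plan is to prove the cycle of implications $(1)\Rightarrow(2)\Rightarrow(3)\Rightarrow(1)$, using Theorem~\ref{thm.1.1} as a black box for the first step. For $(1)\Rightarrow(2)$ I would first record the pointwise bound $|[b,M](f)(x)|\le M_b(f)(x)$, valid at every point when $b\ge0$: since $b\ge0$ we have $M(bf)(x)=\sup_{Q\ni x}|Q|^{-1}\int_Q b(y)|f(y)|\,dy$ and $b(x)M(f)(x)=\sup_{Q\ni x}|Q|^{-1}\int_Q b(x)|f(y)|\,dy$, and for each fixed cube $Q\ni x$ these two averages differ by at most $|Q|^{-1}\int_Q|b(x)-b(y)||f(y)|\,dy\le M_b(f)(x)$; taking suprema and using $|\sup_Q a_Q-\sup_Q c_Q|\le\sup_Q|a_Q-c_Q|$ gives the claim. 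Hence $[b,M]$ inherits the mapping property of $M_b$, and $(2)$ follows from the implication $(1)\Rightarrow(2)$ of Theorem~\ref{thm.1.1}, which applies since $b\in Lip_{\beta,\mu}$.

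For $(2)\Rightarrow(3)$ I would test the operator on $f=\chi_{Q_0}$ for an arbitrary cube $Q_0$. A short two-weight computation shows that the supremum defining $\|\chi_{Q_0}\|_{L^{p,\kappa}(\mu)}$ is attained at $Q=Q_0$ (here $0<\kappa<p/q<1$ is used), so $\|\chi_{Q_0}\|_{L^{p,\kappa}(\mu)}=\mu(Q_0)^{(1-\kappa)/p}$. Since $M(\chi_{Q_0})(x)=1$ on $Q_0$, there $[b,M](\chi_{Q_0})(x)=b(x)-M(b\chi_{Q_0})(x)$; moreover $M(b\chi_{Q_0})(x)\ge M_{Q_0}(b)(x)\ge|b(x)|\ge b(x)$ for a.e.\ $x\in Q_0$ (the last two inequalities at interior Lebesgue points of $|b|$), hence $|b(x)-M_{Q_0}(b)(x)|=M_{Q_0}(b)(x)-b(x)\le M(b\chi_{Q_0})(x)-b(x)=|[b,M](\chi_{Q_0})(x)|$ a.e.\ on $Q_0$. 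Restricting the target-space norm to $Q=Q_0$ and using the boundedness hypothesis yields
$$\Big(\int_{Q_0}|b-M_{Q_0}(b)|^q\mu^{1-q}\Big)^{1/q}\le C\,\mu(Q_0)^{(1-\kappa)/p}\,\mu(Q_0)^{\kappa/p}=C\,\mu(Q_0)^{1/p},$$
and since $1/p=\beta/n+1/q$ this rearranges to exactly inequality~(\ref{equ:1.2}).

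For $(3)\Rightarrow(1)$ I would argue in two steps. To get $b\ge0$ a.e.: because $M_{Q_0}(b)(x)\ge|b(x)|$ a.e., we have $|b-M_{Q_0}(b)|\ge 2b^{-}$ on $Q_0$, so (\ref{equ:1.2}) gives $\mu(Q_0)^{-1}\int_{Q_0}(b^{-})^q\mu^{1-q}\,dx\le C\mu(Q_0)^{q\beta/n}$; letting $Q_0$ shrink to a point and applying the Lebesgue differentiation theorem for the doubling measure $\mu\,dx$ (valid as $\mu\in A_1\subset A_\infty$) forces $(b^{-})^q\mu^{-q}=0$ a.e., i.e.\ $b\ge0$ a.e. To get $b\in Lip_{\beta,\mu}$: for any cube $Q$, $|b(x)-b_Q|\le|b(x)-M_Q(b)(x)|+|M_Q(b)(x)-b_Q|$, and since $M_Q(b)\ge b$ a.e.\ and $M_Q(b)\ge|b_Q|\ge b_Q$ on $Q$, integrating gives $\int_Q|b-b_Q|\,dx\le 2\int_Q(M_Q(b)-b)\,dx=2\int_Q|M_Q(b)-b|\,dx$; then Hölder's inequality with exponents $q$ and $q'$ (using $(q-1)q'/q=1$) together with (\ref{equ:1.2}) gives $\int_Q|M_Q(b)-b|\,dx\le\big(\int_Q|b-M_Q(b)|^q\mu^{1-q}\big)^{1/q}\mu(Q)^{1/q'}\le C\mu(Q)^{\beta/n+1/q+1/q'}=C\mu(Q)^{\beta/n+1}$, which is precisely the $Lip_{\beta,\mu}$ estimate $\mu(Q)^{-\beta/n}\mu(Q)^{-1}\int_Q|b-b_Q|\,dx\le 2C$.

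I expect the main obstacle to be the interface step $(2)\Rightarrow(3)$: correctly identifying $\|\chi_{Q_0}\|_{L^{p,\kappa}(\mu)}$, justifying the a.e.\ pointwise inequality $|b-M_{Q_0}(b)|\le|[b,M]\chi_{Q_0}|$ on $Q_0$, and verifying that the two weights and the Morrey exponent $\kappa q/p$ conspire — via $1/q=1/p-\beta/n$ — to reproduce exactly the normalization in (\ref{equ:1.2}); once this bookkeeping (and the companion identity $(q-1)q'/q=1$ driving $(3)\Rightarrow(1)$) is in place, the remaining steps are routine applications of the maximal function's elementary properties and the Lebesgue differentiation theorem.
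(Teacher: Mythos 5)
Your proposal is correct, and the load-bearing steps $(1)\Rightarrow(2)$ (the pointwise bound $|[b,M](f)|\le M_b(f)$ for $b\ge 0$ followed by Theorem \ref{thm.1.1}) and $(2)\Rightarrow(3)$ (testing on $\chi_{Q_0}$ and using $\|\chi_{Q_0}\|_{L^{p,\kappa}(\mu)}\le\mu(Q_0)^{(1-\kappa)/p}$ together with $1/p=1/q+\beta/n$) coincide with the paper's. The genuine difference is how the loop is closed: the paper dispatches the equivalence $(1)\Leftrightarrow(3)$ entirely to a cited black box (Lemma \ref{lem.ZZ.lip}, from the authors' companion paper), whereas you prove $(3)\Rightarrow(1)$ from scratch --- extracting $b\ge 0$ from $|b-M_Q(b)|\ge 2b^-$ plus the Lebesgue differentiation theorem for $\mu\,dx$, and extracting the $Lip_{\beta,\mu}$ bound from $\int_Q|b-b_Q|\le 2\int_Q|M_Q(b)-b|$ plus H\"older with $(q-1)q'/q=1$. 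Your argument is therefore self-contained where the paper's is not; both sub-steps check out (the identity $\int_Q(M_Q(b)-b_Q)=\int_Q(M_Q(b)-b)$ is what makes the factor $2$ work). A second, smaller divergence: in $(2)\Rightarrow(3)$ the paper invokes the exact identity $M(b\chi_Q)=M_Q(b)$ on $Q$ (cited from Bastero--Milman--Ruiz), while you use only the one-line inequality $M(b\chi_{Q_0})\ge M_{Q_0}(b)\ge|b|\ge b$ a.e.\ on $Q_0$ to get $|b-M_{Q_0}(b)|\le|[b,M](\chi_{Q_0})|$; this avoids the nontrivial half of that identity at no cost. The only point you gloss over is the a.e.\ finiteness of $M(f)$ and $b$ needed to make the supremum-difference inequality in $(1)\Rightarrow(2)$ legitimate, which the paper spells out via Lemma \ref{lem.KS}; this is cosmetic rather than a gap.
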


\begin{remark}
 When $\mu\equiv 1$ the results of Teorems \ref{thm.1.1}
    and \ref{thm.1.2} were proved in \cite{Z2017}.
\end{remark}

\begin{theorem}   \label{thm.1.3}  
Let $b$ be a locally integrable function, $\mu\in {A_1}$
 and $0<\beta<1$.
Suppose that $1<p<n/\beta$, $1/q=1/p-\beta/n$ and $0<\kappa <p/q$.
Then the following assertions are equivalent:

 {\rm (1)} $b\in Lip_{\beta,\mu}$ and $b\geq 0$ a.e. in $\rn$.

 {\rm (2)} $[b,M^{\sharp}]$ is bounded from $L^{p,\kappa}(\mu)$
    to $L^{q,{\kappa}q/p}(\mu^{1-q},\mu)$.

 {\rm (3)}   There exists a constant $C>0$ such that
\begin{align}\label{equ:1.3}
\begin{split}
\sup_{Q}\frac{1}{\mu(Q)^{\beta/n}}
\bigg(\frac{1}{\mu(Q)}\int_{Q}\big|b(x)
 -2M^{\sharp}(b\chi_{Q})(x)\big|^{q}\mu(x)^{1-q}dx\bigg)^{1/q}\leq C.
\end{split}
\end{align}
\end{theorem}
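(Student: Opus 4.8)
The plan is to prove Theorem \ref{thm.1.3} by establishing the cycle of implications $(1)\Rightarrow(2)\Rightarrow(3)\Rightarrow(1)$, mirroring the structure used for Theorem \ref{thm.1.2} but with $M_Q(b)$ replaced by $2M^\sharp(b\chi_Q)(x)$. The engine for $(1)\Rightarrow(2)$ should be a pointwise comparison: for $b\in Lip_{\beta,\mu}$ with $b\ge 0$, I would show that $|[b,M^\sharp](f)(x)|$ is dominated by a constant times $M_b(f)(x)$ together with lower-order maximal-type terms, reducing everything to the already-established boundedness of $M_b$ in Theorem \ref{thm.1.1}. Concretely, one writes $|b(x)M^\sharp(f)(x)-M^\sharp(bf)(x)|\le M^\sharp((b(x)-b)f)(x)+ (\text{error})$, and the point is that $M^\sharp\le 2M$ pointwise, so $M^\sharp((b(x)-b)f)(x)\le 2M((b(x)-b)f)(x)=2M_b(f)(x)$. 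This hands us the $L^{p,\kappa}(\mu)\to L^{q,\kappa q/p}(\mu^{1-q},\mu)$ bound for free. The sign condition $b\ge 0$ is used exactly as in the Bastero–Milman–Ruiz / Zhang arguments to control $M^\sharp(bf)$ from below by something involving $b\,M^\sharp(f)$ on the relevant set.

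For $(2)\Rightarrow(3)$, I would test the operator on $f=\chi_Q$ for an arbitrary cube $Q$. On $Q$ one has $M^\sharp(\chi_Q)(x)$ comparable to a constant (in fact $M^\sharp(\chi_Q)(x)\le 1$ and is bounded below on a large fraction of $Q$), while $M^\sharp(b\chi_Q)(x)$ is precisely the quantity appearing in \eqref{equ:1.3}. Thus $|[b,M^\sharp](\chi_Q)(x)| = |b(x)M^\sharp(\chi_Q)(x)-M^\sharp(b\chi_Q)(x)|$ controls $|b(x)-2M^\sharp(b\chi_Q)(x)|$ up to harmless additive/multiplicative constants on $Q$ (the factor $2$ is the normalization that makes $M^\sharp(\chi_Q)\approx 1/2$ convenient). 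Plugging $f=\chi_Q$ into the assumed boundedness inequality and computing $\|\chi_Q\|_{L^{p,\kappa}(\mu)}^p \approx \mu(Q)^{\kappa}$ on the right and restricting the left-hand norm to the single cube $Q$ with weight $\mu^{1-q}$ yields exactly \eqref{equ:1.3} after rearranging the powers using $1/q=1/p-\beta/n$ and $\kappa q/p$ in the target space; this is the routine bookkeeping step.

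The implication $(3)\Rightarrow(1)$ is where the real work lies, and I expect it to be the main obstacle. The strategy is to show that \eqref{equ:1.3} forces the defining supremum of $\|b\|_{Lip_{\beta,\mu}}$ to be finite. The key inequality is the pointwise bound $|b_Q - (\text{something like } 2M^\sharp(b\chi_Q))| $ controlled in terms of $|b(x)-2M^\sharp(b\chi_Q)(x)|$, combined with $|b(x)-b_Q|\le |b(x)-2M^\sharp(b\chi_Q)(x)| + |2M^\sharp(b\chi_Q)(x)-b_Q|$, and then estimating the average over $Q$ of each piece against $\mu(Q)^{1+\beta/n}$ using $\mu\in A_1$ (which gives $\mu(x)^{1-q}\le C(\mu(Q)/|Q|)^{1-q}$-type reverse bounds and lets one pass between $\mu$-averages and Lebesgue averages). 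One also needs the standard fact that $M^\sharp(b\chi_Q)(x)$ on $Q$ differs from $b_Q$ in an $L^1$-averaged sense by a controlled amount; here one invokes that $b\in L^1_{\rm loc}$ and that \eqref{equ:1.3} already gives $b\in L^q_{\rm loc}(\mu^{1-q})$ locally, so all the integrals make sense. The delicate part is handling the term $\tfrac{1}{\mu(Q)}\int_Q |2M^\sharp(b\chi_Q)(x)-b_Q|\mu(x)^{1-q}dx$: one must show this is itself bounded by $C\mu(Q)^{\beta/n}$, which requires unwinding the definition of $M^\sharp$ over subcubes of $Q$ and re-deriving a weighted oscillation estimate — essentially a weighted, localized version of the classical inequality $\fint_Q|f-f_Q|\lesssim \fint_Q f^\sharp$ of Fefferman–Stein type, adapted to the $A_1$ weight and the $\mu^{1-q}$ density. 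Once that weighted Fefferman–Stein-type bound is in hand, assertion $(1)$ follows immediately, and the sign condition $b\ge 0$ is recovered as in Theorem \ref{thm.1.2}(3) by noting that $2M^\sharp(b\chi_Q)\ge 0$ and running the argument that shows the negative part of $b$ must vanish.
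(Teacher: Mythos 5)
Your implications $(1)\Rightarrow(2)$ and $(2)\Rightarrow(3)$ match the paper's route. For $(1)\Rightarrow(2)$ the paper expands the two suprema directly and arrives at the pointwise bound $|[b,M^{\sharp}](f)(x)|\le 2M_b(f)(x)$; your variant via sublinearity of $M^{\sharp}$ applied to $bf=b(x)f-(b(x)-b)f$ together with $M^{\sharp}\le 2M$ lands on the same bound and is fine (the ``error'' term you allow for is actually zero, and the hypothesis $b(x)\ge 0$ is exactly what lets you write $b(x)M^{\sharp}(f)=M^{\sharp}(b(x)f)$). One caution on $(2)\Rightarrow(3)$: you only assert that $M^{\sharp}(\chi_Q)$ is ``comparable to a constant'' and ``bounded below on a large fraction of $Q$.'' That is not enough. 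The argument needs the exact identity $M^{\sharp}(\chi_Q)(x)=\tfrac12$ for \emph{every} $x\in Q$, which is what turns $b(x)-2M^{\sharp}(b\chi_Q)(x)$ into precisely $2[b,M^{\sharp}](\chi_Q)(x)$ on $Q$; with mere comparability you would pick up an uncontrolled term $|b(x)|\,|1/2-M^{\sharp}(\chi_Q)(x)|$ that you cannot absorb without already knowing something about $b$.

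The genuine gap is in $(3)\Rightarrow(1)$. The paper does not prove this implication at all: the equivalence of (1) and (3) is exactly items (1) and (4)--(5) of Lemma \ref{lem.ZZ.lip}, quoted from \cite{ZZ}, so the paper's burden is only $(1)\Rightarrow(2)\Rightarrow(3)$ plus the citation. You instead propose to prove $(3)\Rightarrow(1)$ from scratch, but the decisive step --- bounding $\frac{1}{\mu(Q)}\int_Q|2M^{\sharp}(b\chi_Q)(x)-b_Q|\,dx$ by $C\mu(Q)^{\beta/n}$ via a ``weighted Fefferman--Stein-type'' oscillation estimate --- is left entirely as a black box, and as described it is close to circular: controlling $M^{\sharp}(b\chi_Q)-b_Q$ in that averaged sense is essentially equivalent to the regularity of $b$ you are trying to establish. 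The standard mechanism (in \cite{BMR} and \cite{ZZ}) is different: one uses the lower bound $M^{\sharp}(b\chi_Q)(x)\ge\frac{1}{2|Q|}\int_Q|b-b_Q|$ (more precisely, works on the subset of $Q$ where $b\le b_Q$) to extract the oscillation of $b$ directly from the quantity $|b-2M^{\sharp}(b\chi_Q)|$, rather than trying to show $2M^{\sharp}(b\chi_Q)$ is close to $b_Q$. Your recovery of $b\ge 0$ from $M^{\sharp}(b\chi_Q)\ge 0$ plus a Lebesgue-differentiation argument is sound, but without the oscillation step the implication $(3)\Rightarrow(1)$ is not established. Either carry out that argument in the weighted setting or, as the paper does, invoke the known characterization from \cite{ZZ}.
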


\begin{remark}
Theorem \ref{thm.1.3} is new even for the case $\mu\equiv 1$.
 The authors studied the boundedness for $[b,M^{\sharp}]$
 on Lebesgue spaces when $b$ belongs to Lipschitz spaces
  in \cite{ZWS} and then extended the result to variable Lebesgue spaces in \cite{Z2019}.
   When $b$ belongs to weighted Lipschitz spaces
   the boundedness of $[b,M^{\sharp}]$ on weighted Lebesgue spaces are obtained
   by Zhang and Zhu in \cite{ZZ}.
\end{remark}

\section{Preliminaries and Lemmas}

In this section, we present some lemmas which will be used 
 in the proof of our results.
In 2011, Tang \cite{T2011} studied the weighted Morrey-Campanato spaces
and established some new characterizations for such spaces.
 We rewrite his Theorem 2.1 as follows to adapt to our cases of
weighted Lipschitz spaces.
 For $x_0\in\rn$ and $r>0$, denote by $B(x_0,r)$
   the ball centered at $x_0$ with radius $r$.

\begin{lemma}[\cite{T2011}]    \label{lem.T2001}  
Let $0<\beta<1$, $w\in{A_1}$ and $1\le{p}\le\infty$.
Then $Lip_{\beta,\mu}^p =Lip_{\beta ,\mu}$ with equivalent norms.
Furthermore, $b\in {Lip_{\beta,w}^p}$
if and only if there exists a constant $C$ such that
$$|b(x)-b(y)|\le {C}\|b\|_{Lip_{\beta,w}}\big[w(B(x,|x-y|))\big]^{\beta/n}(w(x)+w(y))
$$
for a.e. $x, y \in \rn$, especially for $x,y$ being Lebesgue points of $b$.
\end{lemma}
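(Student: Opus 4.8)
The plan is to establish the chain of inequalities
$$\|b\|_{Lip_{\beta,w}}=\|b\|_{Lip_{\beta,w}^1}\le\|b\|_{Lip_{\beta,w}^p}\le\|b\|_{Lip_{\beta,w}^\infty}\le C\,\|b\|_{Lip_{\beta,w}^1},\qquad 1\le p\le\infty,$$
which yields $Lip_{\beta,w}^p=Lip_{\beta,w}$ with equivalent norms, and then to read the pointwise characterization off the $L^{\infty}$-description. The first two inequalities are elementary. That $\|b\|_{Lip_{\beta,w}^1}\le\|b\|_{Lip_{\beta,w}^p}$ follows from H\"older's inequality on each cube $Q$: write $|b-b_Q|=\big(|b-b_Q|^{p}w^{1-p}\big)^{1/p}w^{(p-1)/p}$ and use $(p-1)p'/p=1$, so that $w^{(p-1)p'/p}$ integrates to $w(Q)$ over $Q$; for $p=\infty$ one instead bounds $\frac1{w(Q)}\int_Q|b-b_Q|=\frac1{w(Q)}\int_Q\tfrac{|b-b_Q|}{w}\,w\le\esssup_{x\in Q}\tfrac{|b(x)-b_Q|}{w(x)}$. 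Conversely, $\|b\|_{Lip_{\beta,w}^\infty}<\infty$ means $|b(x)-b_Q|\le\|b\|_{Lip_{\beta,w}^\infty}w(Q)^{\beta/n}w(x)$ for a.e.\ $x\in Q$, and inserting this into $\big(\frac1{w(Q)}\int_Q|b-b_Q|^{p}w^{1-p}\big)^{1/p}$ gives $\|b\|_{Lip_{\beta,w}^p}\le\|b\|_{Lip_{\beta,w}^\infty}$. Thus everything reduces to the self-improvement $\|b\|_{Lip_{\beta,w}^\infty}\le C\,\|b\|_{Lip_{\beta,w}^1}$.

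This self-improvement is the crux, and I would obtain it by a telescoping argument over dyadically shrinking cubes, using the $A_1$ hypothesis twice. First, $w\in A_1$ is doubling, and by the elementary splitting $w(Q\setminus\tfrac12 Q)\ge\big(\essinf_{Q}w\big)|Q\setminus\tfrac12 Q|\ge c\,w(Q)$, with $c=c(n,w)>0$ from the definition of $A_1$, one gets $w(\tfrac12 Q)\le\theta\,w(Q)$ with $\theta=\theta(n,w)\in(0,1)$; iterating, $w(Q_k)\le\theta^{k}w(Q_0)$, where $Q_k$ is the cube centered at a fixed point $x$ of side $2^{-k}\ell(Q_0)$. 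Second, $A_1$ gives $\frac{w(Q_k)}{|Q_k|}\le M(w)(x)\le C\,w(x)$ for a.e.\ $x$, uniformly in $k$ since $M(w)$ is a single function. If $x$ is a Lebesgue point of $b$ and $Q_0$ is centered at $x$, then $b_{Q_k}\to b(x)$, so, telescoping and using $Q_{k+1}\subseteq Q_k$,
$$|b(x)-b_{Q_0}|\le\sum_{k\ge0}|b_{Q_{k+1}}-b_{Q_k}|\le\sum_{k\ge0}2^{n}\,\frac{w(Q_k)}{|Q_k|}\cdot\frac1{w(Q_k)}\int_{Q_k}|b-b_{Q_k}|\le C\|b\|_{Lip_{\beta,w}^1}\,w(x)\,w(Q_0)^{\beta/n}\sum_{k\ge0}\theta^{k\beta/n},$$
which is $\le C\|b\|_{Lip_{\beta,w}^1}w(Q_0)^{\beta/n}w(x)$. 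For an arbitrary cube $Q\ni x$ one applies this with $Q_0$ the cube centered at $x$ with $\ell(Q_0)=2\ell(Q)$ (so $Q\subseteq Q_0\subseteq 3Q$) and adds the average-comparison $|b_{Q_0}-b_Q|$, which is bounded the same way; invoking doubling one gets $|b(x)-b_Q|\le C\|b\|_{Lip_{\beta,w}^1}w(Q)^{\beta/n}w(x)$ for a.e.\ $x\in Q$, i.e.\ $\|b\|_{Lip_{\beta,w}^\infty}\le C\|b\|_{Lip_{\beta,w}^1}$. This closes the norm equivalences.

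It remains to match the pointwise inequality with membership in $Lip_{\beta,w}=Lip_{\beta,w}^\infty$. If $b$ satisfies the stated bound, then for a.e.\ $x$ in a cube $Q$, since $B(x,|x-y|)\subseteq C_nQ$ whenever $x,y\in Q$ (so $w(B(x,|x-y|))\le C\,w(Q)$ by doubling) and $\frac{w(Q)}{|Q|}\le C\,w(x)$ a.e.\ by $A_1$,
$$|b(x)-b_Q|\le\frac1{|Q|}\int_Q|b(x)-b(y)|\,dy\le C\|b\|_{Lip_{\beta,w}}\,w(Q)^{\beta/n}\Big(w(x)+\tfrac{w(Q)}{|Q|}\Big)\le C\|b\|_{Lip_{\beta,w}}\,w(Q)^{\beta/n}w(x),$$
so $b\in Lip_{\beta,w}^\infty$. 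Conversely, if $b\in Lip_{\beta,w}^\infty$, then for Lebesgue points $x,y$ of $b$ I would take $Q_{xy}$ to be the cube centered at $x$ of side $2|x-y|$, so that $y\in Q_{xy}$ and $B(x,|x-y|)\subseteq Q_{xy}\subseteq B(x,\sqrt{n}\,|x-y|)$, whence $w(Q_{xy})\le C\,w(B(x,|x-y|))$ by doubling; then
$$|b(x)-b(y)|\le|b(x)-b_{Q_{xy}}|+|b(y)-b_{Q_{xy}}|\le\|b\|_{Lip_{\beta,w}^\infty}\,w(Q_{xy})^{\beta/n}(w(x)+w(y))\le C\|b\|_{Lip_{\beta,w}}\,\big[w(B(x,|x-y|))\big]^{\beta/n}(w(x)+w(y)),$$
which is the asserted inequality.

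The step I expect to be the main obstacle is the second one: the John--Nirenberg-type passage from $p=1$ to $p=\infty$, i.e.\ bounding $\|b\|_{Lip_{\beta,w}^\infty}$ by $\|b\|_{Lip_{\beta,w}^1}$ through the telescoping sum and the geometric decay $w(Q_k)\le\theta^{k}w(Q_0)$. The remaining steps are routine manipulations with H\"older's inequality together with the doubling property and the pointwise bound $w(Q)/|Q|\le C\,w$ on $Q$ that defines $A_1$ weights.
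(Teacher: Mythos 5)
Your argument is correct, but note that the paper itself offers no proof of this lemma: it is imported verbatim (as Lemma \ref{lem.T2001}) from Tang's Theorem 2.1 in \cite{T2011}, so there is nothing in the paper to compare against line by line. Your proof is the standard self-improvement argument for weighted Campanato/Lipschitz spaces, and all the pieces check out: the monotonicity $\|b\|_{Lip^{1}_{\beta,w}}\le\|b\|_{Lip^{p}_{\beta,w}}\le\|b\|_{Lip^{\infty}_{\beta,w}}$ is just H\"older with respect to the probability measure $w\,dx/w(Q)$ applied to $|b-b_Q|/w$; the crux inequality $\|b\|_{Lip^{\infty}_{\beta,w}}\le C\|b\|_{Lip^{1}_{\beta,w}}$ follows from your telescoping over the centered dyadic shrinkage $Q_k=2^{-k}Q_0$, where the geometric decay $w(Q_k)\le\theta^k w(Q_0)$ (a consequence of $\essinf_Q w\ge w(Q)/(C|Q|)$ for $A_1$ weights) makes $\sum_k w(Q_k)^{\beta/n}$ summable, and the single null set $\{M(w)>[w]_{A_1}w\}\cup\{\text{non-Lebesgue points of }b\}$ controls all cubes simultaneously; and the passage to the two-point inequality via the cube $Q_{xy}$ and doubling is routine. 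The only cosmetic caveat is that your "especially for Lebesgue points of $b$" conclusion actually requires $x,y$ to lie in the slightly smaller full-measure set where in addition $M(w)\le [w]_{A_1}w$ holds pointwise; since the lemma is an a.e. statement this does not affect correctness. In short: a correct, self-contained proof of a result the paper only cites.
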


By Lemma \ref{lem.T2001}, the following result is a consequence of
Definition \ref{def:1.2} for the case $p=\infty$. See also \cite{ZZ}.

\begin{lemma}  \label{lem.ZZ.b-bQ}  
Let $w\in{A_1}$, $0<\beta<1$ and $b\in {Lip_{\beta,w}}$.
 Then for any cube $Q\subset\rn$, there is a constant $C$, such that
\begin{equation}  \label{equ.lem.ZZ.b-bQ}  
|b(x)-b_Q|\le {C}\|b\|_{Lip_{\beta,w}} w(Q)^{\beta/n}w(x)
  ~ \mbox{for~a.e.} ~x\in {Q}.
\end{equation}
\end{lemma}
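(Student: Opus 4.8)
The plan is to obtain \eqref{equ.lem.ZZ.b-bQ} as an immediate consequence of Lemma \ref{lem.T2001} in the case $p=\infty$. Since $w\in{A_1}$ and $0<\beta<1$, that lemma gives $Lip_{\beta,w}=Lip_{\beta,w}^{\infty}$ with equivalent norms; in particular there is a constant $C_0$, depending only on $n$, $\beta$ and the $A_1$ constant of $w$, such that every $b\in Lip_{\beta,w}$ also lies in $Lip_{\beta,w}^{\infty}$ and $\|b\|_{Lip_{\beta,w}^{\infty}}\le C_0\|b\|_{Lip_{\beta,w}}$.

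Next I would simply unravel the definition of the $Lip_{\beta,w}^{\infty}$ norm from Definition \ref{def:1.2}: for every cube $Q\subset\rn$,
\[
\frac{1}{w(Q)^{\beta/n}}\,\esssup_{x\in Q}\frac{|b(x)-b_Q|}{w(x)}
   \le \|b\|_{Lip_{\beta,w}^{\infty}}\le C_0\|b\|_{Lip_{\beta,w}}.
\]
Multiplying through by $w(Q)^{\beta/n}$ and recalling that $w>0$ almost everywhere, this yields $|b(x)-b_Q|\le C_0\|b\|_{Lip_{\beta,w}}\,w(Q)^{\beta/n}w(x)$ for a.e.\ $x\in Q$, which is \eqref{equ.lem.ZZ.b-bQ} with $C=C_0$, independent of $Q$.

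There is essentially no obstacle here; the only point to watch is that the constant $C$ be uniform in $Q$, which it is, since it comes solely from the norm-equivalence constant in Lemma \ref{lem.T2001}. As an alternative one could avoid invoking the $p=\infty$ case and instead start from the pointwise bound in Lemma \ref{lem.T2001}, namely $|b(x)-b(y)|\le C\|b\|_{Lip_{\beta,w}}\bigl[w(B(x,|x-y|))\bigr]^{\beta/n}(w(x)+w(y))$, use $|b(x)-b_Q|\le|Q|^{-1}\int_Q|b(x)-b(y)|\,dy$, and then bound $w(B(x,|x-y|))\le w(Q^{*})\le Cw(Q)$ for a fixed dilate $Q^{*}$ of $Q$ (doubling of $A_1\subset A_\infty$ weights) together with the $A_1$ averaging inequality $|Q|^{-1}\int_Q w(y)\,dy\le Cw(x)$ a.e.\ on $Q$. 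This reproduces the same estimate but is longer and requires the minor bookkeeping of passing to a common enlarged cube; I would present the first route.
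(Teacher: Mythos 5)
Your proof is correct and is exactly the argument the paper intends: the lemma is stated there as a direct consequence of Lemma \ref{lem.T2001} (the equivalence $Lip_{\beta,w}=Lip_{\beta,w}^{\infty}$) combined with Definition \ref{def:1.2} for $p=\infty$. Your first route matches this, and your remark that the constant is uniform in $Q$ because it comes only from the norm-equivalence is the right point to make.
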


\begin{lemma}[\cite{KS}] \label{lem.KS}  
If $1<p<\infty$, $0<\kappa<1$ and $\mu\in A_{p}$,
 then the Hardy-Littlewood maximal operator $M$ is bounded on $L^{p,\kappa}(\mu)$.
\end{lemma}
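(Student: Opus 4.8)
The plan is to establish the Morrey bound cube by cube, splitting $f$ into a part near the cube and a part far from it, treating the near part with the classical Muckenhoupt theorem and the far part with a direct pointwise estimate coming from the $A_p$ condition. So fix a cube $Q\subset\rn$ and write $f=f_1+f_2$, where $f_1=f\chi_{2Q}$, $f_2=f\chi_{\rn\setminus 2Q}$, and $2Q$ is the cube concentric with $Q$ with side length doubled. Since $L^{p,\kappa}(\mu)\subset L^1_{\loc}(\rn)$, the function $M(f)$ is well defined, and it suffices to prove that for $i=1,2$,
$$\frac{1}{\mu(Q)^{\kappa}}\int_Q M(f_i)(x)^p\mu(x)\,dx\le C\|f\|_{L^{p,\kappa}(\mu)}^p$$
with $C$ independent of $Q$ and $f$; taking the supremum over all $Q$ then yields $\|M(f)\|_{L^{p,\kappa}(\mu)}\le C\|f\|_{L^{p,\kappa}(\mu)}$.

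For the near part, since $\mu\in A_p$ the operator $M$ is bounded on $L^p(\mu)$ by Muckenhoupt's theorem, hence
$$\int_Q M(f_1)(x)^p\mu(x)\,dx\le\int_{\rn}M(f_1)(x)^p\mu(x)\,dx\le C\int_{2Q}|f(x)|^p\mu(x)\,dx\le C\|f\|_{L^{p,\kappa}(\mu)}^p\,\mu(2Q)^{\kappa}.$$
As $A_p\subset A_\infty$, the weight $\mu$ is doubling, so $\mu(2Q)\le C\mu(Q)$; dividing by $\mu(Q)^{\kappa}$ gives the claim for $f_1$.

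For the far part, I will show the pointwise estimate $M(f_2)(x)\le C\|f\|_{L^{p,\kappa}(\mu)}\,\mu(Q)^{(\kappa-1)/p}$ for every $x\in Q$. Let $R$ be a cube with $x\in R$. If $R\subseteq 2Q$ then $f_2\equiv 0$ on $R$ and the average over $R$ vanishes; otherwise $R$ reaches outside $2Q$, which forces $\ell(R)\ge\ell(Q)/2$, and consequently $Q\subseteq cR$ for a purely dimensional constant $c$ (with $cR$ again a cube). Then, by Hölder's inequality with exponents $p,p'$ together with the $A_p$ condition for $cR$ (which gives $\int_{cR}\mu^{1-p'}\le C|cR|^{p'}\mu(cR)^{1-p'}$),
$$\frac{1}{|R|}\int_R|f_2|\le\frac{C}{|cR|}\int_{cR}|f|\le\frac{C}{|cR|}\Big(\int_{cR}|f|^p\mu\Big)^{1/p}\Big(\int_{cR}\mu^{1-p'}\Big)^{1/p'}\le C\|f\|_{L^{p,\kappa}(\mu)}\,\mu(cR)^{(\kappa-1)/p}.$$
Because $0<\kappa<1$, the exponent $(\kappa-1)/p$ is negative, and $cR\supseteq Q$ gives $\mu(cR)\ge\mu(Q)$, so the last quantity is at most $C\|f\|_{L^{p,\kappa}(\mu)}\mu(Q)^{(\kappa-1)/p}$. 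Taking the supremum over $R\ni x$ proves the pointwise bound, whence $\int_Q M(f_2)^p\mu\le C\|f\|_{L^{p,\kappa}(\mu)}^p\mu(Q)^{\kappa-1}\mu(Q)=C\|f\|_{L^{p,\kappa}(\mu)}^p\mu(Q)^{\kappa}$, which is the desired bound for $f_2$.

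The argument is essentially routine; the only points requiring a little care are the elementary geometry — that any cube through a point of $Q$ which exits $2Q$ is comparable in size to, or larger than, $Q$, so that a fixed dilate of it contains $Q$ — and the exponent bookkeeping in the Hölder/$A_p$ step. The hypothesis $\kappa<1$ enters precisely here: it makes $\mu(cR)^{(\kappa-1)/p}$ a decreasing function of $cR$, which is what allows passing from arbitrarily large cubes $cR\supseteq Q$ down to the fixed scale $\mu(Q)$; this is also why $M$ is bounded on $L^{p,\kappa}(\mu)$ only in the nondegenerate range $0<\kappa<1$.
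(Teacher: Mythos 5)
Your proof is correct. The paper itself gives no argument for this lemma --- it is quoted from Komori--Shirai \cite{KS} --- and your near/far decomposition ($f=f\chi_{2Q}+f\chi_{\rn\setminus 2Q}$, Muckenhoupt's theorem plus doubling for the local part, H\"older together with the $A_p$ condition for the tail) is precisely the standard argument by which the result is established in that reference, with all exponents and the geometric step ($R\not\subseteq 2Q$ forces $\ell(R)\ge\ell(Q)/2$, hence $Q\subseteq cR$) handled correctly.
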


\begin{lemma}   \label{lem.Q-Morrey}   
If $1<p<\infty$, $0<\kappa<1$ and $\mu\in A_{\infty}$,
 then, for any cube $Q$, we have
$$\|\chi_{Q}\|_{L^{p,\kappa}(\mu)}\leq \mu(Q)^{(1-\kappa)/{p}}.$$
\end{lemma}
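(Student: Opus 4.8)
The plan is to unwind the definition of the one-weight weighted Morrey norm directly. For $f=\chi_Q$ and an arbitrary cube $Q'\subset\rn$ one has
$$\frac{1}{\mu(Q')^{\kappa}}\int_{Q'}|\chi_Q(x)|^p\mu(x)\,dx=\frac{\mu(Q'\cap Q)}{\mu(Q')^{\kappa}},$$
so that $\|\chi_Q\|_{L^{p,\kappa}(\mu)}=\big(\sup_{Q'}\mu(Q'\cap Q)\,\mu(Q')^{-\kappa}\big)^{1/p}$, and it suffices to show that the inner supremum is at most $\mu(Q)^{1-\kappa}$.

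First I would record the two elementary monotonicity facts $\mu(Q'\cap Q)\le\mu(Q')$ and $\mu(Q'\cap Q)\le\mu(Q)$, both valid since $\mu\ge 0$; note also that $0<\mu(Q'),\mu(Q)<\infty$ because $\mu$ is a weight (locally integrable and a.e. positive), so every ratio below is well defined. Then I would split the exponent as $1=(1-\kappa)+\kappa$ and estimate, for each $Q'$,
$$\frac{\mu(Q'\cap Q)}{\mu(Q')^{\kappa}}=\mu(Q'\cap Q)^{1-\kappa}\left(\frac{\mu(Q'\cap Q)}{\mu(Q')}\right)^{\kappa}\le\mu(Q)^{1-\kappa}\cdot 1,$$
using $1-\kappa>0$ together with $\mu(Q'\cap Q)\le\mu(Q)$ for the first factor and $\mu(Q'\cap Q)\le\mu(Q')$ for the second. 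Taking the supremum over all cubes $Q'$ and then the $p$-th root yields $\|\chi_Q\|_{L^{p,\kappa}(\mu)}\le\mu(Q)^{(1-\kappa)/p}$, which is the assertion.

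There is essentially no obstacle here; the only point deserving a word of care is that the supremum defining the Morrey norm runs over \emph{all} cubes $Q'$, including those meeting $Q$ only partially, so one should not try to reduce to the nested cases $Q'\subseteq Q$ or $Q\subseteq Q'$ — the factorization above disposes of every $Q'$ uniformly. It is worth remarking that the full strength of the hypotheses is not used: only $0<\kappa<1$ and the fact that $\mu$ is a weight enter, while $1<p<\infty$ and $\mu\in A_\infty$ play no role in this particular inequality.
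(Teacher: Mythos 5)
Your proof is correct and follows essentially the same route as the paper: both reduce the integral to $\mu(Q'\cap Q)/\mu(Q')^{\kappa}$ and then bound it by $\mu(Q)^{1-\kappa}$ using the two monotonicity inequalities $\mu(Q'\cap Q)\le\mu(Q')$ and $\mu(Q'\cap Q)\le\mu(Q)$ (your factorization is just the paper's step of replacing $\mu(Q')^{-\kappa}$ by $\mu(Q'\cap Q)^{-\kappa}$ written out explicitly). The only cosmetic point is the degenerate case $Q'\cap Q=\emptyset$, where the quantity is simply $0$ and the bound holds trivially.
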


\begin{proof}
For any fixed cube $Q$, noting that $0<\kappa<1$, we have
\begin{align*}
\|\chi_{Q}\|_{L^{p,\kappa}(\mu)}
&=\sup_{Q'}\bigg(\frac{1}{\mu(Q')^{\kappa}}
    \int_{Q'}|\chi_{Q}(x)|^{p}\mu(x)dx\bigg)^{1/p}\\
&=\sup_{Q'}\bigg(\frac{1}{\mu(Q')^{\kappa}}
   \int_{Q'\cap Q}|\chi_{Q}(x)|^{p}\mu(x)dx\bigg)^{1/p}\\
& \le \sup_{Q'}\bigg(\frac{1}{\mu(Q'\cap Q)^{\kappa}}
   \int_{Q'\cap Q}|\chi_{Q}(x)|^{p}\mu(x)dx\bigg)^{1/p}\\
& \le \sup_{Q'}\mu(Q'\cap Q)^{(1-\kappa)/p}\\
&\leq  \mu(Q)^{(1-\kappa)/p}.
\end{align*}
This proves the required conclusion.
\end{proof}

We also need some estimates for a weighted fractional maximal function.

\begin{lemma}[\cite{W2012}]   \label{lem.W2012}  
Let $0<\beta<n, 1< p<n/\beta, 1/q = 1/p-\beta/n$ and $0<\kappa<p/q$.
Suppose that $\mu\in A_{\infty}$, then for any  $1<r<p$, we have
\begin{align*}
\|M_{\beta,\mu,r}(f)\|_{L^{q,\kappa{q}/p}(\mu)}\leq C\|f\|_{L^{p,\kappa}(\mu)},
\end{align*}
where $M_{\beta,\mu,r}$ is a weighted fractional maximal function given by
$$M_{\beta,\mu,r}(f)=\sup_{Q\ni x}
 \bigg(\frac{1}{\mu(Q)^{1-r\beta/n}}\int_{Q}|f(y)|^{r}\mu(y)dy\bigg)^{1/r}.$$
\end{lemma}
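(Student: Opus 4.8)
The plan is to reduce to the case $r=1$ via the usual power trick and then prove the resulting weighted fractional maximal bound on weighted Morrey spaces by a Hedberg-type pointwise estimate combined with a local/global decomposition adapted to the Morrey norm.

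For $0<\gamma<n$ write $M_{\gamma,\mu}(g)(x)=\sup_{Q\ni x}\mu(Q)^{\gamma/n-1}\int_Q|g(y)|\mu(y)\,dy$, so that $M_{\beta,\mu,r}(f)=\bigl(M_{r\beta,\mu}(|f|^r)\bigr)^{1/r}$. With $\widetilde p=p/r$, $\widetilde q=q/r$ and $\widetilde\beta=r\beta$ one checks $0<\widetilde\beta<n$ (since $p\beta<n$ and $r<p$), $\widetilde p,\widetilde q>1$, $1/\widetilde q=1/\widetilde p-\widetilde\beta/n$, $0<\kappa<\widetilde p/\widetilde q=p/q$, and $\kappa\widetilde q/\widetilde p=\kappa q/p$. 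Since
$$\|M_{\beta,\mu,r}(f)\|_{L^{q,\kappa q/p}(\mu)}=\|M_{r\beta,\mu}(|f|^r)\|_{L^{\widetilde q,\kappa\widetilde q/\widetilde p}(\mu)}^{1/r}\qquad\text{and}\qquad\|f\|_{L^{p,\kappa}(\mu)}=\||f|^r\|_{L^{\widetilde p,\kappa}(\mu)}^{1/r},$$
it suffices to prove: \emph{for $0<\gamma<n$, $1<p<n/\gamma$, $1/q=1/p-\gamma/n$, $0<\kappa<p/q$ and $\mu\in A_\infty$, the operator $M_{\gamma,\mu}$ is bounded from $L^{p,\kappa}(\mu)$ to $L^{q,\kappa q/p}(\mu)$.}

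For the underlying global estimate I would use the Hedberg-type bound $M_{\gamma,\mu}(g)(x)\le\bigl(\mathcal M_\mu g(x)\bigr)^{p/q}\,\|g\|_{L^p(\mu)}^{1-p/q}$, where $\mathcal M_\mu g(x)=\sup_{Q\ni x}\mu(Q)^{-1}\int_Q|g|\,d\mu$; this follows from Hölder's inequality $\int_Q|g|\,d\mu\le\|g\|_{L^p(\mu)}\mu(Q)^{1/p'}$ once one observes that the exponent of $\mu(Q)$ that appears is $\gamma/n-1/p+1/q=0$. As $\mu\in A_\infty$ is doubling, $\mathcal M_\mu$ is bounded on $L^p(\mu)$ for $p>1$, so taking $q$-th powers and integrating gives $\|M_{\gamma,\mu}(g)\|_{L^q(\mu)}\le C\|g\|_{L^p(\mu)}$. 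Now fix a cube $Q_0$ and split $f=f\chi_{2Q_0}+f\chi_{\rn\setminus 2Q_0}=:f_1+f_2$. For $f_1$, the global estimate together with $\|f_1\|_{L^p(\mu)}\le\mu(2Q_0)^{\kappa/p}\|f\|_{L^{p,\kappa}(\mu)}$ and the doubling bound $\mu(2Q_0)\le C\mu(Q_0)$ yields $\mu(Q_0)^{-\kappa q/p}\int_{Q_0}M_{\gamma,\mu}(f_1)^q\,d\mu\le C\|f\|_{L^{p,\kappa}(\mu)}^q$. For $f_2$ and $x\in Q_0$, any cube $Q\ni x$ that reaches outside $2Q_0$ has side length at least a fixed multiple of that of $Q_0$, hence $Q_0\subset CQ$ and $\mu(Q)\ge c\,\mu(Q_0)$ by doubling; Hölder's inequality gives $\mu(Q)^{\gamma/n-1}\int_Q|f|\,d\mu\le\mu(Q)^{\kappa/p-1/q}\|f\|_{L^{p,\kappa}(\mu)}$, and since the exponent $\kappa/p-1/q$ is negative this supremum is $\le C\mu(Q_0)^{\kappa/p-1/q}\|f\|_{L^{p,\kappa}(\mu)}$, so that $\mu(Q_0)^{-\kappa q/p}\int_{Q_0}M_{\gamma,\mu}(f_2)^q\,d\mu\le C\|f\|_{L^{p,\kappa}(\mu)}^q$. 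Adding the two contributions and taking the supremum over $Q_0$ finishes the argument, and the power trick transfers it back to $M_{\beta,\mu,r}$.

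The only point requiring real care is the tail estimate for $f_2$: it is precisely the hypothesis $\kappa<p/q$ that makes the exponent $\kappa/p-1/q$ negative, so that among all cubes containing a point of $Q_0$ and meeting the complement of $2Q_0$ it is the smallest ones — comparable to $Q_0$ in $\mu$-measure by doubling — that govern the supremum. Everything else (the Hölder estimates, the doubling bookkeeping, and the elementary verification of the exponent identities in the reduction) is routine.
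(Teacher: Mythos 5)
Your proof is correct. Note that the paper itself offers no proof of this lemma --- it is imported verbatim from Wang \cite{W2012} --- so there is no in-paper argument to compare against; your write-up is a valid self-contained derivation along the standard lines one would expect the cited source to follow. The two pillars both check out: the power trick $M_{\beta,\mu,r}(f)=\bigl(M_{r\beta,\mu}(|f|^r)\bigr)^{1/r}$ with the exponent bookkeeping $1/\widetilde q=1/\widetilde p-\widetilde\beta/n$ and $\kappa\widetilde q/\widetilde p=\kappa q/p$ is exact, and the Hedberg bound $M_{\gamma,\mu}(g)\le(\mathcal{M}_\mu g)^{p/q}\|g\|_{L^p(\mu)}^{1-p/q}$ follows from the two competing estimates $\mu(Q)^{\gamma/n}\mathcal{M}_\mu g(x)$ and $\mu(Q)^{-1/q}\|g\|_{L^p(\mu)}$ exactly as you indicate (the relevant identity being $\gamma/n+1/q=1/p$). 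The local/global splitting is also sound: the $f_1$ piece uses the global $L^p(\mu)\to L^q(\mu)$ bound plus doubling of $\mu\in A_\infty$, and the $f_2$ piece correctly isolates where the hypothesis $\kappa<p/q$ enters, namely in making $\kappa/p-1/q<0$ so that the supremum over large cubes is controlled by $\mu(Q_0)^{\kappa/p-1/q}$. The only ingredients you use as black boxes --- that $A_\infty$ weights are doubling and that $\mathcal{M}_\mu$ is bounded on $L^p(\mu)$ for doubling $\mu$ --- are standard and legitimately citable.
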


\begin{lemma}[\cite{LWM}]   \label{lem.LWM}  
 Let $\mu\in A_{1}$, $0<\beta<1$ and $b\in Lip_{\beta,\mu}$.
 Then there exists a constant $C>0$ such that

{\rm (1)} for any $1\leq r<\infty$ and any cube $Q\ni x$, we have
$$\frac{1}{|Q|}\int_{Q}|f(y)|dy\leq C\mu(Q)^{-\beta/n} M_{\beta,\mu,r}(f)(x);
$$

{\rm(2)}  for any $1<r<\infty$ and any cube $Q\ni x$, we have
$$\frac{1}{|Q|}\int_{Q}\big|(b(y)-b_{Q})f(y)\big|dy
\leq C\|b\|_{Lip_{\beta,\mu}}\mu(x)M_{\beta,\mu,r}(f)(x).
$$
\end{lemma}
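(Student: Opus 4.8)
The plan is to prove both estimates by the same mechanism: convert the unweighted average $\frac{1}{|Q|}\int_Q(\cdot)\,dy$ into a weighted $L^1(\mu)$-average over $Q$ by means of the $A_1$ condition, then raise the exponent from $1$ to $r$ via H\"older's inequality with respect to the probability measure $\mu\,dy/\mu(Q)$, and finally recognize the outcome as $\mu(Q)^{-\beta/n}$ times a quantity that is dominated by $M_{\beta,\mu,r}(f)(x)$ because $Q\ni x$. For part (2) the single extra ingredient is the pointwise control of $|b(y)-b_Q|$ supplied by Lemma \ref{lem.ZZ.b-bQ}.

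For part (1), I would first use the $A_1$ hypothesis in the pointwise form $\mu(Q)/|Q|\le C\mu(y)$ for a.e.\ $y\in Q$; inserting $\frac{1}{|Q|}\le C\mu(y)/\mu(Q)$ under the integral gives $\frac{1}{|Q|}\int_Q|f|\,dy\le \frac{C}{\mu(Q)}\int_Q|f|\mu\,dy$. Splitting $\mu=\mu^{1/r}\mu^{1/r'}$ and applying H\"older's inequality then yields $\frac{1}{\mu(Q)}\int_Q|f|\mu\,dy\le\big(\frac{1}{\mu(Q)}\int_Q|f|^r\mu\,dy\big)^{1/r}$. Writing $\frac{1}{\mu(Q)}=\mu(Q)^{-r\beta/n}\mu(Q)^{r\beta/n-1}$ recasts the right-hand side as $\mu(Q)^{-\beta/n}\big(\frac{1}{\mu(Q)^{1-r\beta/n}}\int_Q|f|^r\mu\,dy\big)^{1/r}$, whose last factor is at most $M_{\beta,\mu,r}(f)(x)$ by the definition of the weighted fractional maximal function. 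Chaining these bounds gives the claim; when $r=1$ the H\"older step is superfluous and the conclusion follows at once from the $A_1$ step.

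For part (2), I would begin with Lemma \ref{lem.ZZ.b-bQ}, which gives $|b(y)-b_Q|\le C\|b\|_{Lip_{\beta,\mu}}\mu(Q)^{\beta/n}\mu(y)$ for a.e.\ $y\in Q$. Substituting this and extracting the constant reduces the left-hand side to $C\|b\|_{Lip_{\beta,\mu}}\mu(Q)^{\beta/n}\cdot\frac{1}{|Q|}\int_Q|f|\mu\,dy$. Applying the $A_1$ condition once more, this time as $\frac{1}{|Q|}\le C\mu(x)/\mu(Q)$ at the fixed point $x$, turns this into $C\|b\|_{Lip_{\beta,\mu}}\mu(x)\mu(Q)^{\beta/n}\cdot\frac{1}{\mu(Q)}\int_Q|f|\mu\,dy$. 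The same H\"older estimate and rescaling as in part (1) bound $\frac{1}{\mu(Q)}\int_Q|f|\mu\,dy$ by $\mu(Q)^{-\beta/n}M_{\beta,\mu,r}(f)(x)$, and the factors $\mu(Q)^{\pm\beta/n}$ cancel to leave exactly $C\|b\|_{Lip_{\beta,\mu}}\mu(x)M_{\beta,\mu,r}(f)(x)$.

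Since each step is elementary once the appropriate form of the $A_1$ inequality and Lemma \ref{lem.ZZ.b-bQ} are invoked, I do not expect a genuine difficulty; the one thing to watch carefully is the bookkeeping of the powers of $\mu(Q)$, namely checking that the H\"older/rescaling step always produces precisely the factor $\mu(Q)^{-\beta/n}$ that cancels the $\mu(Q)^{\beta/n}$ arising from the rescaling in part (1) or from Lemma \ref{lem.ZZ.b-bQ} in part (2). The restriction $r>1$ in part (2) enters only to keep the H\"older conjugate $r'$ finite.
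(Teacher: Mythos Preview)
The paper does not supply its own proof of this lemma; it is quoted verbatim from \cite{LWM} and used as a black box. Consequently there is nothing in the paper to compare your argument against.

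That said, your argument is correct and entirely self-contained within the paper's framework. The two moves --- replacing $1/|Q|$ by $C\mu(\cdot)/\mu(Q)$ via the $A_1$ condition, and then passing from the $L^1(\mu)$-average to the $L^r(\mu)$-average by H\"older with respect to the probability measure $\mu\,dy/\mu(Q)$ --- are exactly the right mechanism, and the bookkeeping of the powers of $\mu(Q)$ checks out in both parts. For part~(2) your use of Lemma~\ref{lem.ZZ.b-bQ} is legitimate and indeed makes the estimate a one-line consequence of part~(1). One small remark: your closing sentence about $r>1$ being needed ``to keep the H\"older conjugate $r'$ finite'' is not quite the reason, since for $r=1$ the H\"older step degenerates to an equality and your proof still goes through; your argument therefore actually establishes part~(2) for all $1\le r<\infty$, slightly more than the stated lemma claims.
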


Now, we show that the weighted fractional maximal function
 controls the maximal commutator $M_b$
 when the symbol $b$ belongs to weighted Lipschitz space.

\begin{lemma}   \label{lem.ZZ-Mb}      
Let $\mu\in A_{1}, 0<\beta<1$ and $b\in Lip_{\beta,\mu}.$
 Then for any locally integrable function $f$ and any $1<r<\infty$,
 we have
$$M_{b}(f)(x)\leq C\|b\|_{Lip_{\beta,\mu}}\mu(x)M_{\beta,\mu,r}(f)(x)
 ~~ \mbox{a.e.}~ x\in \rn.
 $$
\end{lemma}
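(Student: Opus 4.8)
The plan is to prove the pointwise bound by fixing $x\in\rn$, taking an arbitrary cube $Q\ni x$, and estimating the average
$\frac{1}{|Q|}\int_Q |b(x)-b(y)|\,|f(y)|\,dy$
that defines $M_b(f)(x)$. The natural first move is to split $b(x)-b(y)$ through the cube average $b_Q$, writing $|b(x)-b(y)|\le |b(x)-b_Q|+|b(y)-b_Q|$, so that
$$\frac{1}{|Q|}\int_Q |b(x)-b(y)|\,|f(y)|\,dy
\le |b(x)-b_Q|\cdot\frac{1}{|Q|}\int_Q|f(y)|\,dy
\;+\;\frac{1}{|Q|}\int_Q |b(y)-b_Q|\,|f(y)|\,dy.$$
For the first term I would apply Lemma \ref{lem.ZZ.b-bQ} to get $|b(x)-b_Q|\le C\|b\|_{Lip_{\beta,\mu}}\mu(Q)^{\beta/n}\mu(x)$ (valid a.e.\ $x\in Q$), and then bound the remaining average $\frac{1}{|Q|}\int_Q|f|$ by $C\mu(Q)^{-\beta/n}M_{\beta,\mu,r}(f)(x)$ using part (1) of Lemma \ref{lem.LWM}; the two powers $\mu(Q)^{\pm\beta/n}$ cancel, leaving exactly $C\|b\|_{Lip_{\beta,\mu}}\mu(x)M_{\beta,\mu,r}(f)(x)$. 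For the second term I would invoke part (2) of Lemma \ref{lem.LWM} directly, which gives $\frac{1}{|Q|}\int_Q|(b(y)-b_Q)f(y)|\,dy\le C\|b\|_{Lip_{\beta,\mu}}\mu(x)M_{\beta,\mu,r}(f)(x)$. Adding the two estimates and taking the supremum over all cubes $Q\ni x$ yields the claim, since the right-hand bound does not depend on $Q$.

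There is essentially no serious obstacle here: once the pieces of Lemma \ref{lem.LWM} and Lemma \ref{lem.ZZ.b-bQ} are in hand, the lemma is a short combination of them. The one point deserving a little care is the role of the exponent $r$: part (2) of Lemma \ref{lem.LWM} requires $1<r<\infty$ (not merely $r\ge 1$), which matches the hypothesis $1<r<\infty$ of the present lemma, so applying part (1) with the same $r$ is harmless. I should also be careful that the a.e.\ exceptional sets in Lemma \ref{lem.ZZ.b-bQ} and in the $A_1$ condition are handled uniformly — but since for fixed $x$ the bound $|b(x)-b_Q|\le C\|b\|_{Lip_{\beta,\mu}}\mu(Q)^{\beta/n}\mu(x)$ holds for a.e.\ $x$ in each $Q$, and the cubes containing a fixed $x$ form a family over which we take a countable-cofinal supremum, this causes no difficulty. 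Thus the main (and only) work is the two-term splitting above, and the estimate follows by assembling the cited lemmas.
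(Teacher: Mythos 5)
Your argument is correct and is essentially identical to the paper's proof: the same splitting of $|b(x)-b(y)|$ through $b_Q$, with the first term handled by Lemma \ref{lem.ZZ.b-bQ} combined with part (1) of Lemma \ref{lem.LWM} (the $\mu(Q)^{\pm\beta/n}$ factors cancelling) and the second term by part (2) of Lemma \ref{lem.LWM}. Your added remarks about the exponent $r$ and the a.e.\ exceptional sets are sensible but not points the paper dwells on.
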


\begin{proof} Given any $1<r<\infty$. For any $x$ satisfying (\ref{equ.lem.ZZ.b-bQ})
 and any cube $Q\ni{x}$, by Lemmas \ref{lem.ZZ.b-bQ} and \ref{lem.LWM} we have
\begin{align*}
&\frac{1}{|Q|}\int_{Q}\big|b(x)-b(y)\big|\big|f(y)\big|dy\\
&\leq \big|b(x)-b_{Q}\big|\frac{1}{|Q|}\int_{Q}|f(y)|dy
  +\frac{1}{|Q|}\int_{Q}\big|b(y)-b_{Q}\big|\big|f(y)\big|dy\\
&\leq C\|b\|_{Lip_{\beta,\mu}}\mu(x)M_{\beta,\mu,r}(f)(x).
\end{align*}
Therefore, we obtain
\begin{align*}
M_{b}(f)(x)\leq C\|b\|_{Lip_{\beta,\mu}}\mu(x)M_{\beta,\mu,r}(f)(x)
 ~~\mbox{for a.e.}~ x\in \rn,
\end{align*}
which completes the proof.
\end{proof}

To finish this section, we recall the following
 characterizations for weighted Lipschitz function obtained
  by Zhang and Zhu in \cite{ZZ}.

\begin{lemma}   \label{lem.ZZ.lip} 
Let $0<\beta<1$, $b$ be a locally integrable function
 and $\mu\in {A_1}$.
Then the following statements are equivalent:

{\rm(1)} $b \in Li{p_{\beta ,\mu }}$ and $b\ge 0$ a.e. in $\rn$.

{\rm(2)} There exists $1\le{s}<\infty$ such that
\begin{equation}   \label{equ.lem.ZZ.lip1}
\sup_B \frac{1}{\mu(Q)^{\beta/n}}
 \bigg(\frac{1}{\mu(Q)}\int_Q |b(x)-M_Q(b)(x)|^s \mu(x)^{1-s} dx \bigg)^{1/s} \le{C}.
\end{equation}

{\rm(3)} For all $1\le{s}<\infty$, (\ref{equ.lem.ZZ.lip1}) holds.

{\rm(4)} There exists $1\le{s}<\infty$ such that
\begin{equation}   \label{equ.lem.ZZ.lip2}
 \sup_Q \frac{1}{\mu(Q)^{\beta/n}}
  \bigg(\frac{1}{\mu(Q)}\int_Q |b(x)-2M^{\sharp} (b\chi_Q)(x)|^s \mu(x)^{1-s} dx \bigg)^{1/s} \le{C}.
\end{equation}

{\rm(5)} For all $1\le{s}<\infty$, (\ref{equ.lem.ZZ.lip2}) holds.
\end{lemma}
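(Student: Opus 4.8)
The plan is to take statement~(1) as the hub and establish the two chains $(1)\Rightarrow(3)\Rightarrow(2)\Rightarrow(1)$ and $(1)\Rightarrow(5)\Rightarrow(4)\Rightarrow(1)$; the five statements are then equivalent. The steps $(3)\Rightarrow(2)$ and $(5)\Rightarrow(4)$ are trivial (``for all $s$'' yields ``for some $s$''), so the content lies in the forward estimates $(1)\Rightarrow(3)$, $(1)\Rightarrow(5)$ and in the converses $(2)\Rightarrow(1)$, $(4)\Rightarrow(1)$. I would run the $M_Q(b)$-block and the $2M^{\sharp}(b\chi_Q)$-block in parallel, since the two local operators play the same role here. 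The engine of the forward directions is the pair of pointwise estimates, valid for a.e. $x\in Q$,
\[
|b(x)-M_Q(b)(x)|\le C\|b\|_{Lip_{\beta,\mu}}\,\mu(Q)^{\beta/n}\mu(x),\qquad
|b(x)-2M^{\sharp}(b\chi_Q)(x)|\le C\|b\|_{Lip_{\beta,\mu}}\,\mu(Q)^{\beta/n}\mu(x).
\]

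To prove these I assume $b\in Lip_{\beta,\mu}$ with $b\ge0$. For the first, $b\ge0$ gives $M_Q(b)(x)\ge b(x)$, so $|b(x)-M_Q(b)(x)|=\sup_{Q'}(b_{Q'}-b(x))$ over cubes $Q'\subseteq Q$ containing $x$; each difference $b_{Q'}-b(x)$ is bounded using the pointwise Lipschitz inequality of Lemma~\ref{lem.T2001}, the doubling of $\mu$, and the $A_1$ bound $\mu(Q')/|Q'|\le C\mu(x)$, exactly as in the proof of Lemma~\ref{lem.ZZ.b-bQ}. For the second, writing $\tau=|R\cap Q|/|R|$ for a competing cube $R\ni x$ and replacing $b$ by $b(x)$ up to a Lipschitz error controlled by Lemma~\ref{lem.ZZ.b-bQ}, one finds $M^{\sharp}(b\chi_Q)(x)=\frac12 b(x)+O(\|b\|_{Lip_{\beta,\mu}}\mu(Q)^{\beta/n}\mu(x))$, the constant $\frac12=\sup_{0\le\tau\le1}2\tau(1-\tau)$ arising from optimising the competing cube; this is precisely what calibrates the normalising factor $2$. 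Inserting either estimate into \eqref{equ.lem.ZZ.lip1} or \eqref{equ.lem.ZZ.lip2}, the factor $\mu(x)^{1-s}$ cancels against $\mu(x)^{s}$ (leaving $\int_Q\mu=\mu(Q)$) and $\mu(Q)^{\beta/n}$ cancels the outer normalisation, leaving the uniform bound $C\|b\|_{Lip_{\beta,\mu}}$ for every $s$; this gives $(1)\Rightarrow(3)$ and $(1)\Rightarrow(5)$ at once.

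For the converses I first extract $b\ge0$. By the Lebesgue differentiation theorem $M_Q(b)(x)\ge|b(x)|$, while $2M^{\sharp}(b\chi_Q)(x)\ge0$ trivially; hence in both cases $|b(x)-M_Q(b)(x)|\ge b^-(x)$ and $|b(x)-2M^{\sharp}(b\chi_Q)(x)|\ge b^-(x)$, where $b^-=\max\{-b,0\}$. Feeding this into \eqref{equ.lem.ZZ.lip1} (resp. \eqref{equ.lem.ZZ.lip2}) gives $\mu(Q)^{-1}\int_Q b^-(x)^{s}\mu(x)^{1-s}dx\le C\mu(Q)^{s\beta/n}$ for every cube $Q$. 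Since $(\rn,\mu\,dx)$ is a space of homogeneous type, I would apply the Lebesgue differentiation theorem with respect to $\mu\,dx$ to $g=(b^-)^{s}\mu^{-s}$ (locally $\mu\,dx$-integrable by the hypothesis) and let $Q$ shrink to a $\mu$-Lebesgue point $x_0$: the left side tends to $g(x_0)$ and the right side to $0$, forcing $b^-(x_0)=0$, so $b\ge0$ a.e.

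It remains to recover $b\in Lip_{\beta,\mu}$, which I regard as the main obstacle. By Lemma~\ref{lem.T2001} it suffices to bound the unweighted oscillation $|Q|^{-1}\int_Q|b-b_Q|\,dx$, i.e. only the case $s=1$ of the seminorm is needed. The crucial pointwise inputs are the two constant lower bounds $M_Q(b)(x)\ge b_Q$ (immediate, taking $Q'=Q$ and using $b\ge0$) and $2M^{\sharp}(b\chi_Q)(x)\ge b_Q$ for a.e. $x\in Q$; the latter is the delicate step: testing $M^{\sharp}(b\chi_Q)(x)$ against a cube $R\supseteq Q$ with $|R|=2|Q|$ gives $(b\chi_Q)_R=\frac12 b_Q$ and, after splitting $R=Q\cup(R\setminus Q)$ and using $b\ge0$, the bound $\frac{1}{|R|}\int_R|b\chi_Q-(b\chi_Q)_R|\ge\frac12 b_Q$. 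Both lower bounds yield $|b(x)-M_Q(b)(x)|\ge(b_Q-b(x))^+$ and $|b(x)-2M^{\sharp}(b\chi_Q)(x)|\ge(b_Q-b(x))^+$. The $L^1$-balance $\int_Q(b_Q-b)^+dx=\frac12\int_Q|b-b_Q|\,dx$ then converts this one-sided control into the full oscillation:
\[
\frac{1}{|Q|}\int_Q|b-b_Q|\,dx\le\frac{2}{|Q|}\int_Q|b-M_Q(b)|\,dx
\quad\Big(\text{resp. } \le\frac{2}{|Q|}\int_Q|b-2M^{\sharp}(b\chi_Q)|\,dx\Big).
\]
Finally a Hölder step with exponents $s,s'$ turns the weighted $L^s$ bound of \eqref{equ.lem.ZZ.lip1}/\eqref{equ.lem.ZZ.lip2} into $|Q|^{-1}\int_Q|b-M_Q(b)|\,dx\le C|Q|^{-1}\mu(Q)^{1+\beta/n}$, whence $\|b\|_{Lip_{\beta,\mu}}\le C$; this closes $(2)\Rightarrow(1)$ and $(4)\Rightarrow(1)$. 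The two points I expect to demand the most care are the straddling-cube lower bound $2M^{\sharp}(b\chi_Q)\ge b_Q$ and the observation that, thanks to Lemma~\ref{lem.T2001}, only the self-balancing $L^1$ oscillation is needed, so that the one-sided pointwise estimate already suffices and no circular absorption of an $L^s$ maximal bound arises.
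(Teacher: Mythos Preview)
The paper does not prove this lemma at all: it is merely \emph{stated} as a result ``obtained by Zhang and Zhu in \cite{ZZ}'' and used as a black box in the proofs of Theorems~\ref{thm.1.2} and~\ref{thm.1.3}. There is therefore no proof in the paper to compare your proposal against.

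As a standalone argument, your scheme is essentially sound. The hub-and-spokes structure $(1)\Leftrightarrow(2)\Leftrightarrow(3)$ and $(1)\Leftrightarrow(4)\Leftrightarrow(5)$ is the natural one, and the key mechanisms you identify are correct: for the forward directions, the pointwise bound $|b(x)-M_Q(b)(x)|\le C\|b\|_{Lip_{\beta,\mu}}\mu(Q)^{\beta/n}\mu(x)$ follows cleanly from Lemma~\ref{lem.ZZ.b-bQ} applied on subcubes; for the converses, the extraction of $b\ge0$ via Lebesgue differentiation against $\mu\,dx$ and the one-sided oscillation trick $\int_Q|b-b_Q|=2\int_Q(b_Q-b)^+\le 2\int_Q|b-M_Q(b)|$ (using $M_Q(b)\ge b_Q$) are exactly right, and your H\"older step correctly collapses the weighted $L^s$ hypothesis to the $L^1$ seminorm defining $Lip_{\beta,\mu}$. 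The straddling-cube computation for $2M^{\sharp}(b\chi_Q)(x)\ge b_Q$ also checks out. The one place that is genuinely thin is your sketch of $(1)\Rightarrow(5)$: the sentence ``one finds $M^{\sharp}(b\chi_Q)(x)=\tfrac12 b(x)+O(\|b\|_{Lip_{\beta,\mu}}\mu(Q)^{\beta/n}\mu(x))$'' hides a case analysis over competing cubes $R$ (those contained in $Q$, those straddling $\partial Q$, and those containing $Q$) that deserves to be written out, since the error control on the straddling case is where Lemma~\ref{lem.ZZ.b-bQ} and the $A_1$ condition actually do work.
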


\section{Proof of Theorems}

\begin{proof}[Proof of Theorem \ref{thm.1.1}] 
We first proof the implication $(1)\Rightarrow (2)$.
  For any $r$ satisfying $1<r<p$, by Lemma \ref{lem.ZZ-Mb} we have
\begin{align*}
\|M_{b}(f)\|_{L^{q,\kappa{q}/p}(\mu^{1-q},\mu)}
  &= \sup_{Q}\bigg(\frac{1}{\mu(Q)^{\kappa{q/p}}}
      \int_{Q}\big[M_{b}(f)(x)\big]^{q}\mu(x)^{1-q}dx\bigg)^{1/q}\\
&\leq C\|b\|_{Lip_{\beta,\mu}} \sup_{Q}\bigg(\frac{1}{\mu(Q)^{\kappa{q/p}}}
     \int_{Q} \big[M_{\beta,\mu,r}(f)(x)\big]^{q}\mu(x)dx\bigg)^{1/q}\\
&\leq C\|b\|_{Lip_{\beta,\mu}}\|M_{\beta,\mu,r}(f)\|_{L^{q,\kappa{q/p}}(\mu)}\\
& \le C\|b\|_{Lip_{\beta,\mu}} \|f\|_{L^{p,\kappa}(\mu)},
\end{align*}
where the last step follows from Lemma \ref{lem.W2012}.
 This completes the proof of $(1)\Rightarrow (2)$.

Next we proof the implication $(2)\Rightarrow (1)$.
Suppose that assertion (2) holds, 
 it suffices to prove that there is a constant $C>0$
  such that, for all cubes $Q\subset \rn$, we have
\begin{equation}    \label{equ:thm.1.1-1}
\frac{1}{\mu(Q)^{1+\beta/n}}\int_Q\big|b(x)-b_{Q}\big|dx\le{C}.
\end{equation}

For any cube $Q\subset \rn$, applying H\"{o}lder's inequality,
 Lemma \ref{lem.Q-Morrey} and assertion (2), 
 and noting that$1/p=1/q+\beta/n$, we obtain
\begin{align*}
\frac{1}{\mu(Q)^{1+\beta/n}}\int_Q\big|b(x)-b_{Q}\big|dx
&\leq \frac{1}{\mu(Q)^{1+\beta/n}}\int_Q\bigg(\frac{1}{|Q|}\int_Q |b(x)-b(y)|\chi_{Q}(y)dy\bigg)dx\\
&\leq \frac{1}{\mu(Q)^{1+\beta/n}}\int_Q M_{b}(\chi_{Q})(x)dx\\
   &= \frac{1}{\mu(Q)^{1+\beta/n}}\int_Q M_{b}(\chi_{Q})(x)\mu(x)^{(1-q)/q} \mu(x)^{(q-1)/q}dx\\
&\leq \frac{1}{\mu(Q)^{1+\beta/n}}\bigg(\int_Q \big[M_{b}(\chi_{Q})(x)\big]^{q}\mu(x)^{1-q}dx\bigg)^{1/q}
            \bigg(\int_Q  \mu(x) dx\bigg)^{1-1/q}\\
&\leq \frac{1}{\mu(Q)^{1/p-{\kappa/p}}}
    \bigg( \frac{1}{\mu(Q)^{\kappa{q/p}}}
     \int_Q \big[M_{b}(\chi_{Q})(x)\big]^{q}\mu(x)^{1-q}dx\bigg)^{1/q}\\
&\leq \frac{1}{\mu(Q)^{(1-\kappa)/p}}
     \|M_{b}(\chi_{Q})\|_{L^{q,\kappa{q/p}}(\mu^{1-q},\mu)}\\
&\leq \frac{C}{\mu(Q)^{(1-\kappa)/p}} \|\chi_{Q}\|_{L^{p,\kappa}(\mu)}\\
 & \le {C},
\end{align*}
which is nothing other than (\ref{equ:thm.1.1-1}) since the constant $C$ is independent of $Q$.

The proof of Theorem \ref{thm.1.1} is complete.
\end{proof}

\begin{proof}[Proof of Theorem \ref{thm.1.2}] 
Observe that the equivalence of (1) and (3) follows readily from Lemma \ref{lem.ZZ.lip}.
 We only need to check the implications $(1)\Rightarrow(2)$ and $(2)\Rightarrow(3)$.

$(1)\Rightarrow (2)$. ~Given $x\in\rn$ such that
 $M(f)(x)<\infty$ and $0\le {b(x)}<\infty$.
  Noting that $b\geq 0$ a.e. in $\rn$, we have
\begin{equation}  \label{equ:thm.1.2-1}  
\begin{split}
\big|[b,M](f)(x)\big| &=\big|b(x)M(f)(x)-M(bf)(x)\big|\\
&=\bigg|\sup_{Q\ni x}\frac{1}{|Q|}\int_{Q}b(x)|f(y)|dy
    -\sup_{Q\ni x}\frac{1}{|Q|}\int_{Q}b(y)|f(y)|dy\bigg|\\
&\leq \mathop{\sup}\limits_{Q\ni x}\frac{1}{|Q|}\int_{Q}|b(x)-b(y)||f(y)|dy\\
&= M_{b}(f)(x).
\end{split}
\end{equation}

Observe that if $f\in L^{p,\kappa}(\mu)$ then, by Lemma \ref{lem.KS},
we have $M(f)\in L^{p,\kappa}(\mu)$,
which implies that $M(f)<\infty$ a.e. in $\rn$,
 and note that $b$ is finite a.e. in $\rn$ since $b$ is locally integrable.
  Then, we conclude that (\ref{equ:thm.1.2-1}) holds for a.e. $x\in\rn$.

  Thus, by Theorem \ref{thm.1.1} we obtain that $[b,M]$ is
 bounded from $L^{p,\kappa}(\mu)$ to $L^{q,\kappa{q/p}}(\mu^{1-q},\mu)$.

$(2)\Rightarrow (3)$.
 For any fixed $Q \subset\rn$ and $x \in{Q}$,
we have (see \cite{BMR} page 3331 or (2.4) in \cite{ZW2009})
$$M (\chi_Q)(x) = {\chi_Q}(x) \hbox{~~and~~} M (b \chi_Q)(x) = {M_Q}(b)(x).
$$

Noting that $1/p=1/q+\beta/n$ and applying assertion (2)
  and Lemma \ref{lem.Q-Morrey}, we have
\begin{align*}
&\frac{1}{\mu(Q)^{\beta/n}}\bigg(\frac{1}{\mu(Q)}
  \int_{Q}\big|b(x)-M_{Q}(b)(x)\big|^{q}\mu(x)^{1-q}dx\bigg)^{1/q}\\
&=\frac{1}{\mu(Q)^{\beta/n}}\bigg(\frac{1}{\mu(Q)}
  \int_{Q}\big|b(x)M(\chi_{Q})(x)-M_{Q}(b\chi_Q)(x)\big|^{q}\mu(x)^{1-q}dx\bigg)^{1/q}\\
&=\frac{\mu(Q)^{\kappa/p}}{\mu(Q)^{\beta/n+1/q}}
   \bigg(\frac{1}{\mu(Q)^{\kappa{q/p}}}
     \int_{Q}\big|[b,M](\chi_{Q})(x)\big|^{q}\mu(x)^{1-q}dx\bigg)^{1/q}\\
&\leq {\mu(Q)^{(\kappa-1)/p}}
      \big\|[b,M](\chi_{Q})\big\|_{L^{q,\kappa{q/p}}(\mu^{1-q},\mu)}\\
&\leq C{\mu(Q)^{(\kappa-1)/p}} \|\chi_{Q}\|_{L^{p,\kappa}(\mu)}\\
&\leq C.
\end{align*}
Since the constant $C$ is independent of $Q$, then we conclude assertion (3).

So the proof of Theorem \ref{thm.1.2} is finished.
\end{proof}

\begin{proof}[Proof of Theorem \ref{thm.1.3}] 
Similar to the proof of Theorem \ref{thm.1.2},
 by Lemma \ref{lem.ZZ.lip}, we need to prove the implications
  $(1)\Rightarrow(2)$ and $(2)\Rightarrow(3)$.

$(1)\Rightarrow (2)$.
 For any $x\in\rn$ such that $M^{\sharp}(f)(x)<\infty$ and $0\le {b(x)}<\infty$, 
 noting that $b\geq 0$ a.e. in $\rn$, we have
\begin{equation}\label{equ:thm.1.3-1}
\begin{split}
\big|[b,M^{\sharp}](f)(x)\big|
&=\bigg|\sup_{Q\ni x}\frac{b(x)}{|Q|} \int_{Q}\big|f(y)-f_{Q}\big|{d}y
    -\sup_{Q\ni x}\frac{1}{|Q|}\int_{Q}\big|b(y)f(y)-(bf)_{Q}\big|{d}y\bigg|\\
 &\le \mathop{\sup}\limits_{Q\ni x}\frac{1}{|Q|}
  \bigg|\int_{Q}\big|b(x)f(y)-b(x)f_{Q}\big|{d}y
    -\int_{Q}\big|b(y)f(y)-(bf)_{Q}\big|{d}y\bigg|\\
&\leq \sup_{Q\ni x}\frac{1}{|Q|}\int_{Q}\big|\big(b(x)-b(y)\big)f(y)
     +b(x)f_{Q}-(bf)_{Q}\big|{d}y\\
&\leq \sup_{Q\ni x}\bigg\{\frac{1}{|Q|}\int_{Q} \big|b(x)-b(y)\big|\big|f(y)\big|{d}y
     +\big|b(x)f_{Q}-(bf)_{Q}\big|\bigg\}\\
&\leq M_{b}(f)(x)+\mathop{\sup}\limits_{Q\ni x}\frac{1}{|Q|}
    \int_{Q}\big|b(x)-b(z)\big|\big|f(z)\big| {d}z\\
&\leq 2M_{b}(f)(x).
\end{split}
\end{equation}

For $f\in L^{p,\kappa}(\mu)$, by Lemma \ref{lem.KS} again,
we have $M(f)<\infty$ a.e. in $\rn$.
 Then $M^{\sharp}(f)<\infty$ a.e. in $\rn$.
 Note that $b$ is finite a.e. in $\rn$,
  we obtain that (\ref{equ:thm.1.3-1}) holds  for a.e. $x\in \rn$.
 Therefore, it follows from Theorem \ref{thm.1.1} that
 $[b,M^{\sharp}]$ is bounded from $L^{p,\kappa}(\mu)$ to
  $L^{q,\kappa{q/p}}(\mu^{1-q},\mu)$.

$(2)\Rightarrow (3).$~
For any fixed cube $Q$, we have (see \cite{BMR} page 3333  or \cite{ZW2014} page 1383)
$$ M^{\sharp}(\chi_Q)(x) =\frac{1}{2},  ~~\mbox{for~all}~ x\in{Q}.
$$
Thus, for any $x\in{Q}$, we have
\begin{align*}
b(x)-2M^{\sharp}(b\chi_{Q})(x)&=2\bigg(\frac{1}{2}b(x)-M^{\sharp}(b\chi_{Q})(x)\bigg)\\
&=2\Big(b(x)M^{\sharp}(\chi_{Q})(x)-M^{\sharp}(b\chi_{Q})(x)\Big)\\
&=2[b,M^{\sharp}](\chi_{Q})(x).
\end{align*}

Therefore, by assertion (2) and noting that $1/q=1/p-\beta/n$, we obtain
\begin{align*}
&\frac{1}{\mu(Q)^{\beta/n}} \bigg(\frac{1}{\mu(Q)}
       \int_Q {|b(x)-2{M^{\sharp}}(b\chi_Q)(x)|^q \mu(x)^{1-q} dx}\bigg)^{1/q} \\
&=\frac{2}{\mu(Q)^{1/p}}\bigg(\int_{Q}\big|[b,M^{\sharp}](\chi_{Q})(x)\big|^{q}\mu(x)^{1-q}dx\bigg)^{1/q}\\
&=2\mu(Q)^{(\kappa-1)/p}\bigg(\frac{1}{\mu(Q)^{\kappa{q/p}}}
       \int_{Q}\big|[b,M^{\sharp}](\chi_{Q})(x)\big|^{q}\mu(x)^{1-q}dx\bigg)^{1/q}\\
&=2\mu(Q)^{(\kappa-1)/p} \big\|[b,M^{\sharp}](\chi_{Q})\big\|_{L^{q,\kappa{q/p}}(\mu^{1-q},\mu)}\\
&\leq C\mu(Q)^{(\kappa-1)/p} \|\chi_{Q}\|_{L^{p,\kappa}(\mu)}\\
&\leq C,
\end{align*}
where in the last step we have used Lemma \ref{lem.Q-Morrey}.
This concludes the proof of the implication that $(2)\Rightarrow (3)$,
 since the constant $C$ above is independent of $Q$.

The proof of Theorem \ref{thm.1.3} is finished.
\end{proof}

\end{document}